\DeclareRobustCommand\redline{\raisebox{1.3pt}{\tikz{\draw[-,red,line width = 0.9pt](0,0) -- (5mm,0);}}}
\DeclareRobustCommand\blackcirc{\raisebox{-1.3mm}{\tikz{\draw[-,black,line width = 0.9pt](0,0) -- (5mm,0);\node[] at (2.5mm,-.1mm) {\large $\circ$};}}}
\DeclareRobustCommand\blackd{\raisebox{-1.3mm}{\tikz{\draw[-,black,line width = 0.9pt](0,0) -- (5mm,0);\node[] at (2.5mm,-.1mm) {\large $\diamond$};}}}
\newtheorem{theorem}{\bf Theorem}
\newtheorem{proposition}{\bf Proposition}
\newtheorem{corollary}{\bf Corollary}[section]
\newtheorem{assumption}{\bf Assumption}[section]
\newtheorem{remark}{\bf Remark}[section]
\newtheorem{lemma}{\bf Lemma}[section]
\newtheorem{definition}{\bf Definition}[section]
\renewcommand*{\@opargbegintheorem}[3]{\trivlist
  \item[\hskip \labelsep{\bfseries #1\ #2}] \textbf{(#3)}\ \itshape}
\newcommand{\X}{\mathbb{X}}
\newcommand{\N}{\mathbb{N}}
\newcommand{\E}{\mathbb{E}}
\newcommand{\R}{\mathbb{R}}
\newcommand{\B}{\mathcal{B}}
\newcommand{\A}{\mathcal{A}}
\newcommand{\Q}{\mathbb{Q}}
\renewcommand{\O}{\mathbb{O}}
\renewcommand{\P}{\mathbb{P}}
\newcommand{\Pc}{\mathcal{P}}
\renewcommand{\d}{\mathrm{d}}
\newcommand\myeq[1]{\stackrel{\mathclap{\normalfont\mbox{\scriptsize #1}}}{=}}
\newcommand{\revise}[1]{{\color{black}#1}}
\newcommand*{\LONGVERSION}{}
\begin{document}

\title{\LARGE \bf Distributionally-Robust Optimization with Noisy Data \\ for Discrete Uncertainties Using Total Variation Distance}

\author{Farhad Farokhi
\thanks{F. Farokhi is with the Department of Electrical and Electronic Engineering, The University of Melbourne, Australia. This work has been financially supported, in part, by a grant from the Australian Research Council (ARC) DP210102454.}
}

\maketitle

\begin{abstract} 
Stochastic programs, where uncertainty distribution must be inferred from \textit{noisy} data samples, are considered. They are approximated with distributionally\-/robust optimizations that minimize the worst-case expected cost over ambiguity sets, i.e., sets of distributions that are sufficiently compatible with  observed data. The ambiguity sets capture probability distributions whose convolution with the noise distribution is within a ball centered at the empirical noisy distribution of data samples parameterized by total variation distance. Using the prescribed ambiguity set, the solutions of the distributionally\-/robust optimizations converge to the solutions of the original stochastic programs when the number of the data samples grow to infinity. Therefore, the proposed distributionally\-/robust optimization problems are \textit{asymptotically consistent}. The distributionally\-/robust optimization problems can be cast as \textit{tractable optimization problems}.
\end{abstract}

\section{Introduction}
In this paper, we consider a single-stage stochastic program of the form $\inf_{x\in\X}\E^{\P}[h(x,\xi)]$,
where $x\in\R^n$ is a decision variable that must be determined to minimize the expected cost $\E^{\P}[h(x,\xi)]$ while $\xi\in\R^m$ is a random uncertainty with distribution $\P$. Distribution $\P$ is not available and must be inferred from data samples. Merely relying on the empirical distribution evaluated using data samples, instead of the original distribution $\P$, can result in disappointing outcomes. This is known colloquially as the ``optimizer's curse'' and is caused by over-fitting~\cite{michaud1989markowitz}. One way to avoid this concern is to instead consider a set of distributions that are sufficiently compatible with the observed data, known as an ambiguity set $\mathcal{P}$, and minimizes the worst-case expected cost over the ambiguity set $\sup_{\Q\in\mathcal{P}}\E^{\Q}[h(x,\xi)]$. This approach is known as distributionally\-/robust optimization~\cite{mohajerin2018data}.

Distributionally-robust optimization dates back to  ambiguity-averse or robust news-vendor problem~\cite{scarf1958min}. However, there has been a recent revival in the field due to computationally-favourable reformulations~\cite{ben2013robust,delage2010distributionally,erdougan2006ambiguous,hu2013kullback,mohajerin2018data,pflug2007ambiguity,wozabal2012framework,bartl2021sensitivity}. These results mostly differ on how they construct the ambiguity sets using, e.g., moment constraints~\cite{delage2010distributionally}, the Prohorov metric~\cite{erdougan2006ambiguous}, the Kullback-Leibler divergence~\cite{hu2013kullback}, and the Wasserstein metric~\cite{wozabal2012framework, mohajerin2018data}. 

Distributionally-robust optimization  has however mostly  focused on noiseless data~\cite{ben2013robust,delage2010distributionally,erdougan2006ambiguous,hu2013kullback,mohajerin2018data,pflug2007ambiguity,wozabal2012framework}, i.e., high-quality independently and identically realized samples from distribution $\P$ are required. Noting that we sometimes only have access to noisy data samples, there has been some recent investigations into situations where the data samples are noisy~\cite{van2021efficient,farokhi2022distributionally}. For instance, data samples may be intentionally corrupted by privacy-preserving noise~\cite{abowd2018us}. Alternatively, data can be truly noisy because of inherent instrumentation uncertainty or noise. The problem with noisy data is that, even with infinitely-many data points, the empirical distribution does not coverage to the original distribution $\P$. The empirical distribution instead converges to $\P'$, which is the outcome of convolution of the original distribution $\P$ and the noise distribution $\O$~\cite{van2021efficient,farokhi2022distributionally}. \revise{One way to pose distributionally\-/robust optimization in the presence of noisy data is to expand the ambiguity sets, by setting the radius of the uncertainty ball around the empirical noisy distribution large enough, to contain the noiseless distribution~\cite{guo2022partial,farokhi2022distributionally}. This however can create unnecessary conservatism because we have to deal with the worst-case expected cost over a large ambiguity set that contains distributions that may behave differently from the original noiseless distribution. Such ideas effectively dictate that the radius of the ambiguity set must remain non-trivially large even in the big data regime. Therefore, these method can only provide useful solutions and guarantees in the small noise regime, i.e., when the variance or entropy of the noise is relatively small, and thus enlargement of the ambiguity set is minimally conservative. As opposed, in Section~\ref{sec:concentration} of this paper, we observe that the radius of the ambiguity set converges to zero as more samples are gathered. Therefore, the method of this paper is asymptotically less conservative.}  Another way is to use other metrics for constructing the ambiguity set~\cite{van2021efficient}. The alternatives however still suffer from conservatism discussed above. \revise{None of} these studies consider the specific way that the noise distribution changes the original distribution, i.e., through convolution. In fact, in the large data regime, it is reasonable to expect that the effect of the noisy measurements is negligible. This is because we can always remove the effect of the noise by density deconvolution~\cite{carroll1988optimal,farokhi2020deconvoluting} even if the noise is significant, i.e., it has large variance or entropy. 

An important aspect of obtaining good results in distributionally\-/robust optimization is to appropriately select  the ambiguity set. The ambiguity set must be large  enough to contain the original density $\P$ while it must be small enough to not make the results conservative. The solution of $\sup_{\Q\in\mathcal{P}}\E^{\Q}[h(x,\xi)]$ must remain reasonably close to $\E^{\P}[h(x,\xi)]$. Otherwise, we incentivize overly conservative decisions by considering distributions $\Q\in\mathcal{P}$ that are far from reality $\P$. The ambiguity set must also be easily reconstructable from the data samples to ensure computationally tractable reformulation of the distributionally\-/robust optimization problem. In this paper, we define the ambiguity set by considering a set of probability distributions whose convolution with the known noise distribution remains appropriately close to the empirical noisy distribution of the data samples in the sense of the total variation distance. We prove that, using the prescribed ambiguity set, the solution of the distributionally\-/robust optimization converges to the solution of the original stochastic program when the number of the data samples grows to infinity. Therefore, the distributionally\-/robust optimization problem is asymptotically consistent. To prove this result, we need to assume that the distribution of the noise is uniformly diagonally dominant. Finally, we show that the robust optimization problem can be cast as a tractable convex optimization problem.
 
The rest of the paper is organized as follows. First, we finish this section by presenting some useful notations. Subsequently, in Section~\ref{sec:data_driven}, we formally define distributionally\-/robust optimization based on noisy data samples. In Section~\ref{sec:concentration}, we consider asymptotic properties of the distributionally\-/robust optimization, such as asymptotic consistency, using concentration bounds for learning discrete distributions. We re-cast the distributionally\-/robust optimization as a tractable convex optimization problem in Section~\ref{sec:solving}. Finally, we present some numerical results in Section~\ref{sec:numerical} and conclude the paper in Section~\ref{sec:conclusions}.

\section*{Notation}
For any set $\A$, the \revise{cardinality} of the set is denoted by $|\A|$. For any finite set $\A$, i.e., $|\A|<\infty$, $\Delta(\A)$ denotes the probability simplex on $\A$. The product of two probability distributions $\P_1\in\Delta(\Xi_1)$ and $\P_2\in\Delta(\Xi_2)$ is the distribution $\P_1\otimes\P_2\in\Delta(\Xi_1\times\Xi_2)$. The $N$-fold product of a distribution $\P\in\Delta(\Xi)$ is denoted by $\P^N\in\Delta(\Xi^N)$. The total variation distance between any two distributions $\P_1,\P_2\in\Delta(\Xi)$ is
\begin{align*}
	\d_{\rm TV}(\P_1,\P_2):=&\sup_{\A\subseteq \Xi} (\P_1(\A)-\P_2(\A))\\
	=&\frac{1}{2}\sum_{\xi\in\Xi} |\P_1(\xi)-\P_2(\xi)|\in[0,1].
\end{align*}

\section{Data-Driven Programming} \label{sec:data_driven}
Consider the stochastic program
\begin{align}\label{eqn:stochastic_program}
	J^\star:=\inf_{x\in\X}\left\{\E^{\P}[h(x,\xi)]=\sum_{\xi\in\Xi}h(x,\xi)\P(\xi ) \right\},
\end{align}
with feasible set $\X\subseteq \R^n$, discrete/finite uncertainty set $\Xi\subseteq\R^m$ (i.e., $|\Xi|<\infty$), and loss function $h:\X\times\Xi\rightarrow\R$. The loss function $h$ depends on the decision vector $x\in\R^n$ and the random variable $\xi\in\R^m$, whose distribution $\P$ is supported on $\Xi$, i.e., $\P\in\Delta(\Xi)$. The distribution $\P$ is  \textit{unknown}. Therefore  problem~\eqref{eqn:stochastic_program} cannot be solved exactly. Although $\P$ is unknown, it is partially observable through a finite set of $N$ independently and identically distributed (i.i.d.) \textit{noisy} samples 
\begin{align}
	\xi'_i\sim \O(\cdot|\xi_i),\quad \xi_i\sim\P, \quad i\in\{1,\dots,N\},
\end{align}
where $\xi_i\sim \P$ are i.i.d.~samples from $\P\in\Delta(\Xi)$ while $\xi'_i$ are noisy observations of $\xi_i$ realized according to the conditional distribution $\O(\cdot|\xi_i)\in\Delta(\Xi')$. The marginal distribution of the noisy observations is given by
\begin{align} \label{eqn:noisy_unnoisy}
	\P'(\xi')=\sum_{\xi\in\Xi} \O(\xi'|\xi) \P(\xi), \quad \forall \xi'\in\Xi'.
\end{align}
Note that support set of $\P'$, which is $\Xi'$, may not necessarily be equal to the support set of $\P$, which is $\Xi$. Nonetheless, we assume that $\Xi',\Xi\subseteq\R^m$. This assumption is not strictly necessary, however, it simplifies the narrative without substantial conservatism (up to relabeling the elements in $\Xi'$). For instance, noisy data based on additive noise satisfies this condition.  For the sake of brevity, we write that
\begin{align}
	\P'=\O\star \P.
\end{align}
Note that, here, we have opted for the convolution notation `$\star$'  because the relationship in~\eqref{eqn:noisy_unnoisy} \revise{is visually similar to} discrete convolution of the probability mass functions, particularly when the noise is additive\footnote{
	\revise{When dealing with additive noise, i.e., $\xi'_i=\xi_i+n_i$, the marginal distribution of the noisy observations in~\eqref{eqn:noisy_unnoisy} can be rewritten as $\P'(\xi')=\sum_{\xi\in\Xi} \O(\xi'-\xi) \P(\xi)$ as, by slight abuse of notation, $\O(\xi'|\xi)=\O(\xi'-\xi)$. This implies that $\P'$ is equal to convolution of $\O$ and $\P$.}
}.  

\revise{
\begin{remark}[Known Noise Distribution]
	We assume that the distribution of the noise $\O$ is known. The motivation for this is twofold. First, in privacy-preserving applications (e.g., the example in Section~\ref{sec:numerical}), the distribution of the noise, which is a function of the privacy budget and privacy-preserving mechanism, is often publicly known to improve transparency and accountability and to also allow for post processing~\cite{Dwork_Kohli_Mulligan_2019}. Furthermore, in sensing and instrumentation, the distribution of the noise is usually publicly shared in data-sheets while the distribution of the underlying variable is oft unknown. When the distribution of the noise is not known two approaches can be used. We can either use repeated measurements to estimate the distribution of the noise~\cite{101214009053607000000884}, which is not possible in privacy-preserving applications as repeated queries/responses erode privacy, or follow a worst-case adversarial approach~\cite{bennouna2022holistic}, which may be conservative.
\end{remark}

\begin{remark}[Finite Uncertainty Set] The assumption that the uncertainty set $\Xi$ is finite is natural in some cases. For instance, the data could be inherently discrete, such as categorical attributes~\cite{wang2014coupled} and finite state spaces~\cite{hoshino2021probability}. In other instances, communication constraints or post-processing can render the data discrete~\cite{9511566,wu2021l2}.
\end{remark}

\begin{remark}[Impact of Density Deconvolution] 
    Combining density deconvolution~\cite{farokhi2020deconvoluting} and techniques used for developing computationally-efficient distributionally-robust optimization~\cite{mohajerin2018data} is non-trivial. This is because after deconvolution, the empirical density function is no longer composed of delta functions, which breaks down an important step in the proof of Theorem 4.2 in~\cite{mohajerin2018data}.
\end{remark}

}

We denote the training dataset composed of the noisy samples by $\Xi'_{N}:=\{\xi'_i\}_{i=1}^N$. A data-driven solution for problem~\eqref{eqn:stochastic_program} is a feasible decision $\hat{x}_N\in\X$ that is constructed from the training dataset $\Xi'_{N}$. The \textit{out-of-sample performance} of $\hat{x}_N$ is defined as $\E^\P[h(\hat{x}_N,\xi)],$ which is the expected cost of the data-driven solution $\hat{x}_N\in\X$ for a new sample $\xi$ from $\P$, which is independent of the training dataset. Since $\P$ is unknown, the out-of-sample performance cannot be evaluated explicitly in practice. Therefore, we would like to establish performance guarantees for the out-of-sample performance. By construction, because of the feasibility of $\hat{x}_N\in\X$, we know that
\begin{align*}
	\E^\P[h(\hat{x}_N,\xi)]\geq J^\star,
\end{align*}
where $J^\star$ is the optimal solution in~\eqref{eqn:stochastic_program}. In line with the literature on distributionally\-/robust optimization~\cite{mohajerin2018data}, we are interested in out-of-sample performance bounds:
\begin{align*}
	{\P'}^N\{\Xi'_N:\E^\P[h(\hat{x}_N,\xi)]\leq \hat{J}_N\}\geq 1-\beta,
\end{align*}
where $\hat{J}_N$ is an upper bound that potentially depends on the training dataset and $\beta\in(0,1)$ is a significance or confidence parameter. Note that the dataset $\Xi'_{N}$ is a random variable governed by the $N$-fold product\footnote{\revise{Note that ${\Xi'}^N$ denotes $N$-fold Cartesian product of set $\Xi'$ with itself, i.e., ${\Xi'}^N=\Xi'\times\dots\times\Xi'$, while $\Xi'_{N}$ denotes the set of noisy data samples, i.e.,  $\Xi'_{N}=\{\xi'_i\}_{i=1}^N$. Therefore, by definition, $\Xi'_{N}\subseteq {\Xi'}^N$. The same distinction also holds for $\Xi^N$ and $\Xi_N$.}} distribution ${\P'}^N\in\Delta({\Xi'}^N)$. 

One way to solve this problem is to compute the discrete empirical probability distribution $\hat{\P}'_N\in\Delta(\Xi')$:
\begin{align} \label{eqn:noisy_data_empirical_distribution}
	\hat{\P}'_N(\xi)=\frac{1}{N}\sum_{i=1}^N \delta[\xi'_i-\xi],
\end{align}
where $\delta:\R^m\rightarrow\R$ is the Kronecker delta function, i.e., $\delta[x]=1$ if $x=0$ and $\delta[x]=0$ otherwise. This amounts to approximating the stochastic program in~\eqref{eqn:stochastic_program} with the noisy sample-average approximation (NSAA) problem:
\begin{align}
	\hat{J}_{\rm NSAA}:=\inf_{x\in\X}\left\{\E^{\hat{\P}'_N}[h(x,\xi)]=\frac{1}{N}\sum_{i=1}^N h(x,\xi'_i) \right\}.
\end{align}
However, it should be noted that $ \lim_{N\rightarrow\infty}\E^{\hat{\P}'_N}[h(x,\xi)]\myeq{a.s.}\E^{\P'}[h(x,\xi)]\neq \E^{\P}[h(x,\xi)].$ 
This is caused by the noisy nature of the samples. In this paper, we address this problem by explicitly considering the effect of noisy measurements. We particularly use $\Xi'_N$ to create an ambiguity set $\hat{\Pc}_N\subseteq\Delta(\Xi)$ containing all distributions that could have generated the noiseless samples with high confidence. This ambiguity set enables us to define the distributionally\-/robust optimization problem:
\begin{align} \label{eqn:DRO}
	\hat{J}_{\rm DRO} (\hat{\Pc}_N):=\inf_{x\in\X}\sup_{\Q\in\hat{\Pc}_N}\E^{\Q}[h(x,\xi)].
\end{align}
If the optimal solution to~\eqref{eqn:DRO} exists and is attained for some element of $\X$, we denote the solution with $\hat{x}_{\rm DRO}(\hat{\Pc}_N)$. We drop the reference to $\hat{\Pc}_N$ and use $\hat{J}_{\rm DRO} $ and $\hat{x}_{\rm DRO} $ instead of $\hat{J}_{\rm DRO} (\hat{\Pc}_N)$ and $\hat{x}_{\rm DRO} (\hat{\Pc}_N)$, respectively, when the ambiguity set is clear from the context. 

\revise{
\begin{remark}[Existence of Solution]
 The inner optimization problem in~\eqref{eqn:DRO}, i.e., $\sup_{\Q\in\hat{\Pc}_N}\E^{\Q}[h(x,\xi)]$, possesses a finite supremum if the cost function $h(x,\xi)$ is bounded and the ambiguity set $\hat{\Pc}_N$ is compact, which holds for the ambiguity sets defined using total variation distance in Section~\ref{sec:concentration}. Existence of solution to the outer problem, i.e., $\inf_{x\in\X} (\sup_{\Q\in\hat{\Pc}_N}\E^{\Q}[h(x,\xi)])$, is more subtle. The solution exists and is attained whenever $\sup_{\Q\in\hat{\Pc}_N}\E^{\Q}[h(x,\xi)]$ is continuous in $x$ (e.g., if the solution to the inner problem is unique, and the cost function is bounded and continuous) and the feasible set $\X$ is compact. Note that existence of solutions does not imply computational feasibility of finding one. The latter requires extra assumptions, e.g., convexity. 
\end{remark}
}

In what follows, we construct $\hat{\Pc}_N$ as a ball around the empirical distribution~\eqref{eqn:noisy_data_empirical_distribution} with respect to the total variation distance with an additional constraint based on density convolution to account for the noisy nature of the data. 

\section{Concentration Bounds on Empirical Discrete Distributions}
\label{sec:concentration}
The following concentration bounds provide the basis for establishing finite sample guarantees that we use to develop the ambiguity set $\hat{\Pc}_N\subseteq\Delta(\Xi)$ in the distributionally\-/robust optimization framework of~\eqref{eqn:DRO}. 

\begin{theorem}[{{\hspace{-.1mm}\cite[Theorem~1]{canonne2020short}}}] \label{tho:TV_concentration} For the empirical distribution in~\eqref{eqn:noisy_data_empirical_distribution}, ${\P'}^N\{\d_{\rm TV}(\P',\hat{\P}'_N)\leq \varepsilon\}\geq 1-\alpha$ if $
	N\geq {\max\{|\Xi|,2\ln(2/\alpha)\}}/{\varepsilon^2}.$
\end{theorem}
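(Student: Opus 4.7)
The plan is to bound the total variation distance as half of the $\ell_1$ distance between $\hat{\P}'_N$ and $\P'$, and then combine (i) a bound on its mean with (ii) a concentration inequality for its fluctuations around that mean. This is exactly the argument behind Canonne's short note.

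First, I would write
\begin{align*}
\d_{\rm TV}(\P',\hat{\P}'_N)=\frac{1}{2}\sum_{\xi'\in\Xi'}|\hat{\P}'_N(\xi')-\P'(\xi')|.
\end{align*}
For each fixed $\xi'$, the count $N\hat{\P}'_N(\xi')$ is Binomial with parameters $N$ and $\P'(\xi')$, so Jensen's inequality applied to $z\mapsto\sqrt{z}$ gives $\E|\hat{\P}'_N(\xi')-\P'(\xi')|\leq \sqrt{\P'(\xi')(1-\P'(\xi'))/N}\leq \sqrt{\P'(\xi')/N}$. Summing over $\xi'\in\Xi'$ and applying Cauchy--Schwarz (with $\sum_{\xi'}\P'(\xi')=1$) produces $\E[\d_{\rm TV}(\P',\hat{\P}'_N)]\leq \tfrac{1}{2}\sqrt{|\Xi|/N}$.

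Second, I would invoke McDiarmid's bounded-differences inequality on the function $(\xi'_1,\dots,\xi'_N)\mapsto \d_{\rm TV}(\P',\hat{\P}'_N)$. Replacing any single noisy sample changes the empirical distribution on two atoms by at most $1/N$ each, so the total variation distance changes by at most $1/N$. Therefore, for every $t>0$,
\begin{align*}
{\P'}^N\{\d_{\rm TV}(\P',\hat{\P}'_N)-\E[\d_{\rm TV}(\P',\hat{\P}'_N)]>t\}\leq \exp(-2Nt^2).
\end{align*}
Setting the right-hand side equal to $\alpha$ and combining with the mean bound yields, with probability at least $1-\alpha$,
\begin{align*}
\d_{\rm TV}(\P',\hat{\P}'_N)\leq \tfrac{1}{2}\sqrt{|\Xi|/N}+\sqrt{\ln(1/\alpha)/(2N)}.
\end{align*}

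Finally, I would require the right-hand side to be at most $\varepsilon$ (e.g.\ forcing each summand to be $\leq \varepsilon/2$ and solving for $N$, or using the tighter route that keeps the $2/\alpha$ inside the logarithm) and rearrange to recover $N\geq \max\{|\Xi|,2\ln(2/\alpha)\}/\varepsilon^2$ via elementary algebra. The main obstacle is matching the stated constants exactly: a slightly different bookkeeping (Bretagnolle--Huber--Carol, Poissonization, or a union bound) would yield the same $1/\varepsilon^2$ rate but with different numerical constants, so care is needed to choose the Jensen/McDiarmid combination whose constants collapse cleanly into the stated ``max''. No other step poses a genuine difficulty, because independence of the noisy samples makes both the expectation computation and the McDiarmid application essentially mechanical.
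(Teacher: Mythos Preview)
The paper does not prove this statement at all: it is quoted verbatim from the cited reference and used as a black box. Your sketch is correct and is precisely the argument in Canonne's note---bound $\E[\d_{\rm TV}(\P',\hat{\P}'_N)]$ via Jensen plus Cauchy--Schwarz, then apply McDiarmid with bounded differences $1/N$---so you have supplied more than the paper itself does; the only caveat is the minor constant mismatch you already flagged (your derivation naturally produces $\ln(1/\alpha)$ rather than $\ln(2/\alpha)$, which is in fact slightly stronger than the stated bound).
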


Let us define total-variation ball $\B'_{\rm TV,\varepsilon}(\hat{\P}'_N):=\{\Q:\d_{\rm TV}(\Q,\hat{\P}'_N)\leq \varepsilon\}$ and 
\begin{align}
	\varepsilon_{\rm TV}(\alpha):=\sqrt{\frac{\max\{|\Xi|,2\ln(2/\alpha)\}}{N}}. 
\end{align}
The following corollary follows from Theorems~\ref{tho:TV_concentration}.

\begin{corollary} \label{cor:distribution_balls}
		${\P'}^N\{\P'\in \B'_{\rm TV,\varepsilon_{\rm TV}(\alpha)}(\hat{\P}'_N)\}\geq 1-\alpha$.
\end{corollary}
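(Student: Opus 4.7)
The plan is to apply Theorem~\ref{tho:TV_concentration} directly, choosing $\varepsilon$ to be precisely the value $\varepsilon_{\rm TV}(\alpha)$ at which the sample-size hypothesis of that theorem becomes an equality. The corollary is essentially the inverted form of the finite-sample bound, expressing a confidence radius rather than a required sample count.

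First, I would rewrite the event of interest in terms of the total variation distance. By the definition of $\B'_{\rm TV,\varepsilon}(\hat{\P}'_N)$, and using the symmetry of $\d_{\rm TV}$, the event $\{\P'\in \B'_{\rm TV,\varepsilon_{\rm TV}(\alpha)}(\hat{\P}'_N)\}$ coincides with $\{\d_{\rm TV}(\P',\hat{\P}'_N)\leq \varepsilon_{\rm TV}(\alpha)\}$. This identification is the only (entirely trivial) substantive step.

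Second, I would verify the hypothesis of Theorem~\ref{tho:TV_concentration} with $\varepsilon=\varepsilon_{\rm TV}(\alpha)$. Squaring the definition gives $\varepsilon_{\rm TV}(\alpha)^2 = \max\{|\Xi|,2\ln(2/\alpha)\}/N$, so $N=\max\{|\Xi|,2\ln(2/\alpha)\}/\varepsilon_{\rm TV}(\alpha)^2$ and the sample-size bound $N\geq \max\{|\Xi|,2\ln(2/\alpha)\}/\varepsilon^2$ is satisfied (with equality). Invoking Theorem~\ref{tho:TV_concentration} then yields ${\P'}^N\{\d_{\rm TV}(\P',\hat{\P}'_N)\leq \varepsilon_{\rm TV}(\alpha)\}\geq 1-\alpha$, which combined with the first step is precisely the claim.

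There is no real obstacle: the corollary is a reformulation of Theorem~\ref{tho:TV_concentration} in which the role of the free parameters $N$ and $\varepsilon$ is swapped, holding $\alpha$ fixed and solving the hypothesis condition for the admissible radius. The only point worth flagging, should one wish to be pedantic, is that the concentration bound is stated for distributions supported on $\Xi$ whereas $\P'$ and $\hat{\P}'_N$ are supported on $\Xi'$; this is implicitly resolved by the convention introduced earlier that $\Xi',\Xi\subseteq\R^m$ and by taking $|\Xi|$ as an upper bound on (or placeholder for) the relevant support size in the definition of $\varepsilon_{\rm TV}(\alpha)$.
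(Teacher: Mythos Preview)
Your proposal is correct and mirrors the paper's own treatment: the paper states that the corollary ``immediately follows from Theorem~\ref{tho:TV_concentration},'' which is exactly the inversion of the sample-size condition you carry out. Your remark about the $\Xi$ versus $\Xi'$ discrepancy is a fair observation but does not alter the argument.
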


Note that, so far, we have been focused on concentration bounds for learning $\P'$. However, to solve~\eqref{eqn:stochastic_program}, we must learn $\P$. Let us define sets
	\begin{align} \label{eqn:def_new_set}
		\B_{\rm TV,\varepsilon}(\hat{\P}'_N):=\{\Q:  \O\star\Q \in \B'_{\rm TV,\varepsilon}(\hat{\P}'_N) \}.
	\end{align}
The following corollary immediately follows from Corollary~\ref{cor:distribution_balls} and the definition of the ambiguity set in~\eqref{eqn:def_new_set}. 

\begin{corollary} \label{cor:distribution_slices}
 	${\P'}^N\{\P\in \B_{\rm TV,\varepsilon_{\rm TV}(\alpha)}(\hat{\P}'_N)\}\geq 1-\alpha$.
\end{corollary}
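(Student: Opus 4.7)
The plan is to observe that the event $\{\P \in \B_{\rm TV,\varepsilon_{\rm TV}(\alpha)}(\hat{\P}'_N)\}$ and the event $\{\P' \in \B'_{\rm TV,\varepsilon_{\rm TV}(\alpha)}(\hat{\P}'_N)\}$ are in fact identical under the sampling model, so the statement reduces directly to Corollary~\ref{cor:distribution_balls}. Only the definition~\eqref{eqn:def_new_set} and the convolution identity $\P'=\O\star\P$ are needed.

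First I would unpack the definition~\eqref{eqn:def_new_set}: by construction, $\P\in \B_{\rm TV,\varepsilon_{\rm TV}(\alpha)}(\hat{\P}'_N)$ holds if and only if $\O\star\P \in \B'_{\rm TV,\varepsilon_{\rm TV}(\alpha)}(\hat{\P}'_N)$. Second, I would substitute $\P'=\O\star\P$ from~\eqref{eqn:noisy_unnoisy} to rewrite the latter condition as $\P'\in \B'_{\rm TV,\varepsilon_{\rm TV}(\alpha)}(\hat{\P}'_N)$. Because this chain of equivalences holds for every realization of the dataset $\Xi'_N$, the two ${\P'}^N$-measurable events coincide, and hence they have the same probability. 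Applying Corollary~\ref{cor:distribution_balls} to the rewritten event then delivers the desired bound $\geq 1-\alpha$.

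There is essentially no obstacle here: the only thing to be careful about is making the set-theoretic equality of the two events explicit (rather than merely noting that one contains the other), so that the lower bound transfers without any loss. I would also briefly remark that although $\B_{\rm TV,\varepsilon}(\hat{\P}'_N)$ is defined as a subset of $\Delta(\Xi)$ while $\B'_{\rm TV,\varepsilon}(\hat{\P}'_N)\subseteq \Delta(\Xi')$, the bridge provided by the noise channel $\O$ makes the pullback of the second ball under convolution equal to the first, which is precisely the content of~\eqref{eqn:def_new_set}.
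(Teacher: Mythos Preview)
Your proposal is correct and matches the paper's approach exactly: the paper simply states that the corollary ``immediately follows from Corollary~\ref{cor:distribution_balls} and the definition of the ambiguity set in~\eqref{eqn:def_new_set},'' which is precisely the equivalence $\P\in \B_{\rm TV,\varepsilon_{\rm TV}(\alpha)}(\hat{\P}'_N) \Leftrightarrow \O\star\P=\P'\in \B'_{\rm TV,\varepsilon_{\rm TV}(\alpha)}(\hat{\P}'_N)$ you spelled out.
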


\revise{
\begin{remark}[Total Variation Distance vs. Other Probability Metrics]
	Various probability metrics, such as Kullback–Leibler divergence~\cite{hu2013kullback} and Wasserstein distance~\cite{mohajerin2018data}, are used for defining the ambiguity sets in distributionally-robust optimization. The use of total variation distance in this paper gives rise to a linear programming problem for computing the worst-case distribution (see  Theorem~\ref{tho:worst_case}), which is computationally favorable. The use of Kullback–Leibler divergence would have resulted in a convex nonlinear program. Also note that the total variation distance is an optimal transportation distance with an indicator cost function~\cite{villani2008optimal}. To use other optimal transport distances, such as the Wasserstein distance, we must endow the finite uncertainty sets $\Xi$ and $\Xi'$ with distances. For categorical sets that are not subsets of metric spaces,  e.g., $\{\mbox{male},\mbox{ female}\}$, distances can be artificial.
\end{remark}
}

\begin{theorem}[Out-of-Sample Performance]
	Assume that $\hat{J}_{\rm DRO}$ and $\hat{x}_{\rm DRO}$ denote the optimal value and an optimizer of the distributionally\-/robust optimization problem~\eqref{eqn:DRO}, where the ambiguity set is $\hat{\Pc}_N=\B_{\rm TV,\varepsilon_{\rm TV}(\alpha)}(\hat{\P}'_N)$. Then,
	\begin{align*}
		{\P'}^N\{\Xi'_N:\E^\P[h(\hat{x}_{\rm DRO},\xi)]\leq \hat{J}_{\rm DRO}\}\geq 1-\alpha.
	\end{align*}
\end{theorem}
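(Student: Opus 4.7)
The plan is to reduce the out\-/of\-/sample guarantee to the containment statement in Corollary~\ref{cor:distribution_slices}. The intuition is that the distributionally\-/robust value $\hat{J}_{\rm DRO}$ was constructed precisely to dominate the expected cost under \emph{every} distribution in the ambiguity set $\hat{\Pc}_N$; it therefore dominates the expected cost under the true $\P$ exactly on the event where $\P$ happens to lie in $\hat{\Pc}_N$, and that event has probability at least $1-\alpha$ by the corollary.

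Concretely, I would proceed in three short steps. First, invoke Corollary~\ref{cor:distribution_slices} to assert that the event
\begin{align*}
	\mathcal{E}:=\{\Xi'_N : \P\in \B_{\rm TV,\varepsilon_{\rm TV}(\alpha)}(\hat{\P}'_N)\}
\end{align*}
satisfies ${\P'}^N(\mathcal{E})\geq 1-\alpha$. Since $\hat{\Pc}_N=\B_{\rm TV,\varepsilon_{\rm TV}(\alpha)}(\hat{\P}'_N)$ by assumption, on $\mathcal{E}$ we have $\P\in\hat{\Pc}_N$. Second, on the event $\mathcal{E}$, use the definition of the supremum to write
\begin{align*}
	\E^{\P}[h(\hat{x}_{\rm DRO},\xi)]\leq \sup_{\Q\in\hat{\Pc}_N}\E^{\Q}[h(\hat{x}_{\rm DRO},\xi)].
\end{align*}
Third, by the definition of $\hat{x}_{\rm DRO}$ and $\hat{J}_{\rm DRO}$ as the optimizer and optimal value of~\eqref{eqn:DRO} with ambiguity set $\hat{\Pc}_N$, the right\-/hand side equals $\hat{J}_{\rm DRO}$. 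Chaining these two observations gives $\E^{\P}[h(\hat{x}_{\rm DRO},\xi)]\leq \hat{J}_{\rm DRO}$ on $\mathcal{E}$, whence the claimed probability bound follows by monotonicity of measure.

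There is essentially no hard step here: the entire argument is a containment plus a sup\-/domination. The only subtlety worth flagging is to make sure that $\hat{x}_{\rm DRO}$ is well\-/defined, i.e.\ that an optimizer exists (this is an assumption of the statement), and that the ambiguity set used in defining $\hat{J}_{\rm DRO}$ is literally the same set appearing in Corollary~\ref{cor:distribution_slices}, so that the event ``$\P\in\hat{\Pc}_N$'' is exactly the event on which the sup\-/bound applies. No further regularity on $h$, $\X$, or $\Xi$ is required beyond what has already been assumed in Section~\ref{sec:data_driven}.
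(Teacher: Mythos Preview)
Your proposal is correct and follows essentially the same route as the paper: invoke Corollary~\ref{cor:distribution_slices} to obtain $\P\in\hat{\Pc}_N$ with probability at least $1-\alpha$, then on that event bound $\E^{\P}[h(\hat{x}_{\rm DRO},\xi)]$ by the supremum over $\hat{\Pc}_N$, which equals $\hat{J}_{\rm DRO}$ since $\hat{x}_{\rm DRO}$ is an optimizer of~\eqref{eqn:DRO}. The paper's proof is just a terser rendition of exactly these three steps.
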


\begin{proof}
	Corollary~\ref{cor:distribution_slices} shows that $\P\in \hat{\Pc}_N$ with probability of at least $1-\alpha$. Therefore, with probability of at least $1-\alpha$, $\E^\P[h(\hat{x}_{\rm DRO},\xi)]\leq \sup_{\Q\in \hat{\Pc}_N}\E^\Q[h(\hat{x}_{\rm DRO},\xi)]\linebreak =\hat{J}_{\rm DRO}$.
\end{proof}

An important property for a stochastic estimator with access to random samples is consistency, i.e., the property that, as the number of samples increases towards infinity, the resulting sequence of estimates converges in some sense (convergence in probability or almost sure convergence) to the true solution~\cite[\S~2.3]{akahira2012asymptotic}. This has been a particularly sought-after property in distributionally\-/robust optimization~\cite{mohajerin2018data}. To prove consistency, we make the following assumption regarding the conditional probability of the noisy measurements. 

\begin{assumption} \label{assum:diagonally_dominant}
	For $\Xi'=\Xi$, $\O$ is uniformly diagonally dominant if $\min_{\xi\in\Xi}\O(\xi|\xi)>|\Xi| \max_{\xi,\xi'\in\Xi,\xi\neq \xi'}\O(\xi'|\xi)$.
\end{assumption}

Assumption~\ref{assum:diagonally_dominant} focuses on conditional distributions that are uniformly diagonally dominant. This is a slightly stronger notion than diagonal dominance, i.e., $\O(\xi|\xi)>\sum_{\xi'\neq \xi}\O(\xi'|\xi)$. Diagonal dominance is a powerful tool for analysis in linear algebra~\cite[\S~2]{johnston2021advanced} and probability~\cite{jiang2021consistent}. Assumption~\ref{assum:diagonally_dominant} essentially requires that the data is not heavily perturbed by the noise.

\begin{theorem} \label{tho:consistency}
	Assume that $\alpha_N\in(0,1)$, for all $N\in\N$, be such that $\sum_{N=1}^\infty \alpha_N<\infty$ while $\lim_{N\rightarrow\infty} \epsilon_{\rm TV}(\alpha_N)=0$. Furthermore, assume that $\hat{J}_{\rm DRO}$ and $\hat{x}_{\rm DRO}$ denote the optimal value and an optimizer of the distributionally\-/robust optimization problem~\eqref{eqn:DRO}, where the ambiguity set is $\hat{\Pc}_N=\B_{\rm TV,\varepsilon_{\rm TV}(\alpha)}(\hat{\P}'_N)$. Then, under Assumptions~\ref{assum:diagonally_dominant}, 
	\begin{itemize}
		\item If there exists $L\geq 0$ such that $|h(x,\xi)|\leq L$ for all $(x,\xi)\in\X\times\Xi$, then $\lim_{N\rightarrow\infty}\hat{J}_{\rm DRO}\myeq{a.e.}J^\star$.
		\item If there exists $L\geq 0$ such that $|h(x,\xi)|\leq L$ for all $(x,\xi)\in\X\times\Xi$, $\X$ is closed, and $h(x,\xi)$ is lower semi-continuous in $x$ for every $\xi\in\Xi$, then any accumulation point of $\{\hat{x}_{\rm DRO}\}_{N\in\N}$ is almost surely an optimal solution for~\eqref{eqn:stochastic_program}.
	\end{itemize}
\end{theorem}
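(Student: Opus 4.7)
The plan is to combine (i) an ``eventual containment'' statement $\P \in \hat{\Pc}_N$ obtained by Borel-Cantelli, with (ii) a quantitative deconvolution bound, powered by Assumption~\ref{assum:diagonally_dominant}, that guarantees the total variation radius of $\hat{\Pc}_N$ inside $\Delta(\Xi)$ shrinks to zero. Since $\sum_N \alpha_N < \infty$, Corollary~\ref{cor:distribution_slices} and Borel-Cantelli imply that, almost surely, $\P \in \hat{\Pc}_N$ for all sufficiently large $N$. On this event, for every $x \in \X$, $\sup_{\Q \in \hat{\Pc}_N} \E^\Q[h(x,\xi)] \geq \E^\P[h(x,\xi)]$, and taking the infimum over $x$ yields $\hat{J}_{\rm DRO} \geq J^\star$, whence $\liminf_N \hat{J}_{\rm DRO} \geq J^\star$ almost surely.

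The crux is the following \emph{deconvolution stability lemma}, which I expect to be the main obstacle: under Assumption~\ref{assum:diagonally_dominant}, setting $p := \min_{\xi \in \Xi} \O(\xi|\xi)$,
\begin{align*}
\d_{\rm TV}(\Q_1,\Q_2) \leq \frac{1}{2p-1}\, \d_{\rm TV}(\O\star\Q_1,\, \O\star\Q_2),\ \forall\, \Q_1,\Q_2 \in \Delta(\Xi).
\end{align*}
Viewing $\O$ as a column-stochastic matrix $M$ with entries $M_{\xi'\xi}=\O(\xi'|\xi)$ and setting $v:=\Q_1-\Q_2$ (a signed measure with $\sum_\xi v(\xi)=0$), I would split $(Mv)(\xi')$ into its diagonal and off-diagonal contributions, apply the reverse triangle inequality, and swap the order of summation to obtain $\|Mv\|_1 \geq \sum_\xi (2M_{\xi\xi}-1)|v(\xi)|$. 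The nontriviality of this bound hinges on $p > 1/2$, which I would derive from Assumption~\ref{assum:diagonally_dominant} as follows: column stochasticity forces $1-p \leq (|\Xi|-1)\max_{\xi\neq\xi'}\O(\xi'|\xi) < (|\Xi|-1)p/|\Xi|$, and rearranging yields $p \geq |\Xi|/(2|\Xi|-1) > 1/2$.

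Given the lemma, on the event $\P \in \hat{\Pc}_N$ the triangle inequality implies $\d_{\rm TV}(\O\star\Q,\O\star\P) \leq 2\varepsilon_{\rm TV}(\alpha_N)$ for every $\Q \in \hat{\Pc}_N$, so $\d_{\rm TV}(\Q,\P) \leq 2\varepsilon_{\rm TV}(\alpha_N)/(2p-1)$. Combined with $|h|\leq L$ and the elementary estimate $|\E^\Q[h(x,\xi)] - \E^\P[h(x,\xi)]| \leq 2L\,\d_{\rm TV}(\Q,\P)$, this gives
\begin{align*}
\sup_{\Q \in \hat{\Pc}_N} \E^\Q[h(x,\xi)] \leq \E^\P[h(x,\xi)] + \frac{4L\,\varepsilon_{\rm TV}(\alpha_N)}{2p-1}.
\end{align*}
For any $\eta > 0$, evaluating at an $\eta$-suboptimal $x_\eta$ for~\eqref{eqn:stochastic_program} and letting $N\to\infty$ (using $\varepsilon_{\rm TV}(\alpha_N)\to 0$) yields $\limsup_N \hat{J}_{\rm DRO} \leq J^\star + \eta$; sending $\eta\to 0$ and combining with the lower bound proves $\hat{J}_{\rm DRO}\to J^\star$ almost surely, which is the first bullet.

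For the second bullet, let $x^\star$ be an accumulation point of $\{\hat{x}_{\rm DRO}\}_{N\in\N}$ along a subsequence $\hat{x}_{\rm DRO}^{N_k}\to x^\star$; closedness of $\X$ gives $x^\star\in\X$. On the almost-sure event that $\P\in\hat{\Pc}_{N_k}$ eventually, $\hat{J}_{\rm DRO}^{N_k} \geq \E^\P[h(\hat{x}_{\rm DRO}^{N_k},\xi)]$. Passing to the $\liminf$ in $k$, invoking the first bullet for the left-hand side and applying Fatou's lemma to the finite sum over $\Xi$ (the boundedness $|h|\leq L$ supplies a constant lower envelope and the lower semi-continuity of $h(\cdot,\xi)$ transfers through the nonnegative weights $\P(\xi)$), I obtain $J^\star \geq \E^\P[h(x^\star,\xi)]$. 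Since $x^\star\in\X$ forces $\E^\P[h(x^\star,\xi)]\geq J^\star$, equality holds and $x^\star$ is almost surely optimal for~\eqref{eqn:stochastic_program}.
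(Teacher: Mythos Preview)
Your argument is correct and follows the same overall architecture as the paper's proof: Borel--Cantelli from Corollary~\ref{cor:distribution_slices} for the eventual lower bound $\hat{J}_{\rm DRO}\geq J^\star$, a deconvolution stability lemma to shrink the ambiguity set and obtain $\limsup_N \hat{J}_{\rm DRO}\leq J^\star$, and then lower semi-continuity plus Fatou for the accumulation-point statement. The substantive difference is your deconvolution lemma. The paper's Lemma~\ref{lemma:useful} bounds the off-diagonal sum crudely by $|\Xi|\max_{\xi\neq\xi'}\O(\xi'|\xi)\,\|v\|_1$, yielding the constant $c_0(\O)=\bigl(\min_\xi\O(\xi|\xi)-|\Xi|\max_{\xi\neq\xi'}\O(\xi'|\xi)\bigr)^{-1}$, which is precisely where the \emph{uniform} diagonal dominance of Assumption~\ref{assum:diagonally_dominant} is consumed. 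By instead swapping the summation order and invoking column stochasticity, $\sum_{\xi'\neq\xi}\O(\xi'|\xi)=1-\O(\xi|\xi)$, you obtain the sharper constant $(2p-1)^{-1}$; this lemma is strictly tighter and, more interestingly, only needs ordinary diagonal dominance $p>1/2$, so your proof in fact establishes consistency under a weaker hypothesis than the paper states. Two minor points where you are also more careful than the paper: you make the triangle-inequality step $\d_{\rm TV}(\O\star\Q,\O\star\P)\leq 2\varepsilon_{\rm TV}(\alpha_N)$ explicit (the paper writes $\varepsilon_{\rm TV}(\alpha_N)$ without justifying how $\O\star\P$ enters the ball), and you work with an $\eta$-suboptimal $x_\eta$ rather than assuming the infimum in~\eqref{eqn:stochastic_program} is attained.
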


\ifdefined\SHORTVERSION
\begin{proof}
The proof of this theorem is inspired by~\cite[Theorem~3.6]{mohajerin2018data}. Several aspects are however changed to accommodate noisy measurements, which was not considered in that paper.  \revise{The detailed proof is moved to an online report~\cite{Farokhi_report_2023} due to space constraints.}
\end{proof}

\fi 

\ifdefined\LONGVERSION
\begin{proof} The proof of this theorem is inspired by~\cite[Theorem~3.6]{mohajerin2018data}. Several aspects are however changed to accommodate noisy measurements, which was not considered in that paper. 
	
	As $\hat{x}_{\rm DRO}\in\X$,  $J^\star\leq \E^{\P}[h(\hat{x}_{\rm DRO},\xi)].$ Therefore, Corollary~\ref{cor:distribution_slices} implies that $ {\P'}^N\{J^\star \leq \E^{\P}[h(\hat{x}_{\rm DRO}, \xi)] \leq \hat{J}_{\rm DRO} \}  \geq {\P'}^N\{\P \in \hat{\Pc}_N\} \geq 1-\alpha_N.$ The Borel-Cantelli Lemma~\cite[Theorem~2.18]{kallenberg2002foundations} in conjunction with that $\sum_{N}\alpha_N<\infty$ implies that
	\begin{align}
		{\P'}^\infty\{J^\star \leq \E^{\P}[h(&\hat{x}_{\rm DRO}, \xi)] \leq \hat{J}_{\rm DRO}
		\nonumber\\&\mbox{ for all sufficiently large } N\} =1.\label{eqn:proof:6}
	\end{align}
	In what follows, we prove that $\limsup_{N\rightarrow\infty} \hat{J}_{\rm DRO} \leq J^\star$ with probability one. To do so, note that, for any $\Q\in\B_{\rm TV,\varepsilon_{\rm TV}(\alpha_N)}$, we have
	\begin{subequations}
		\begin{align}
			\E^{\Q}[h(x,\xi)]
			=&\sum_{\xi\in\Xi}h(x,\xi)\Q(\xi)\nonumber\\
			=&\sum_{\xi\in\Xi}h(x,\xi)\P(\xi)+\sum_{\xi\in\Xi}h(x,\xi)(\revise{\Q}(\xi)-\revise{\P}(\xi))\nonumber\\
			\leq& \E^{\P}[h(x,\xi)]+L\sum_{\xi\in\Xi}|\P(\xi)-\Q(\xi)| \label{eqn:proof:1}\\
			=&\E^{\P}[h(x,\xi)]+2L\d_{\rm TV}(\P,\Q) \label{eqn:proof:2}\\
			\leq &\E^{\P}[h(x,\xi)]\nonumber\\
                &+2Lc_0(\O)\d_{\rm TV}(\O\star\P,\O\star\Q)\label{eqn:proof:3}\\
			\leq &\E^{\P}[h(x,\xi)]+2Lc_0(\O)\varepsilon_{\rm TV}(\alpha_N),\label{eqn:proof:4}
		\end{align}
	\end{subequations}
	where~\eqref{eqn:proof:1} follows from that $|h(x,\xi)|\leq L$,~\eqref{eqn:proof:2} follows from the definition of the total variation distance,~\eqref{eqn:proof:3} follows from Lemma~\ref{lemma:useful} in the appendix, and finally~\eqref{eqn:proof:4} follows from that $\hat{\Pc}_N:=\B_{\rm TV,\varepsilon_{\rm TV}(\alpha)}(\hat{\P}'_N)$. Therefore, because $\lim_{N\rightarrow\infty}\varepsilon_{\rm TV}(\alpha_N)=0$, we get
	\begin{align}
		\limsup_{N\rightarrow\infty} \hat{J}_{\rm DRO}
		&\leq \limsup_{N\rightarrow\infty}  \sup_{\Q\in\hat{\Pc}_N}\E^{\Q}[h(x^*,\xi)]\nonumber\\
		&\leq \limsup_{N\rightarrow\infty}  (\E^{\P}[h(x^*,\xi)] + 2Lc_0(\O)\varepsilon_{\rm TV}(\alpha_N))\nonumber\\
		&\myeq{a.s.}  \E^{\P}[h(x^*,\xi)]=J^\star.\label{eqn:proof:5}
	\end{align}
	Combining~\eqref{eqn:proof:6} and~\eqref{eqn:proof:5} shows that $\lim_{N\rightarrow\infty}\hat{J}_{\rm DRO}\myeq{a.e.}J^\star$.
	
	Let $\bar{x}$ be an accumulation point of the sequence $\{\hat{x}_{\rm DRO}\}_{N\in\N}$. Evidently, $\bar{x}\in \X$ as $\X$ is closed. Note that, without loss of generality, we can assume that $\lim_{N\rightarrow\infty}\hat{x}_{\rm DRO}=\bar{x}$. This is because we can always select a subsequence of $\{\hat{x}_{\rm DRO}\}_{N\in\N}$ that meets this property. Therefore,
	\begin{subequations}
		\begin{align}
			J^\star 
			\leq& \E^{\P}[h(\bar{x},\xi)] \label{eqn:proof:10}\\
			\leq & \E^{\P}[\liminf_{N\rightarrow\infty}h(\hat{x}_{\rm DRO},\xi)] \label{eqn:proof:11}\\
			\leq &\liminf_{N\rightarrow\infty} \E^{\P}[h(\hat{x}_{\rm DRO},\xi)] \label{eqn:proof:12}\\
			\leq & \lim_{N\rightarrow\infty} \hat{J}_{\rm DRO} \label{eqn:proof:13}\\
			\myeq{a.e.} & J^\star \label{eqn:proof:14}
		\end{align}
	\end{subequations}
	where~\eqref{eqn:proof:10} follows from that $\bar{x}\in\X$,~\eqref{eqn:proof:11} follows from lower semi-continuity of $h(x,\xi)$ in $x$,~\eqref{eqn:proof:12} follows from the  Fatou's lemma~\cite[p.\,18]{lieb2001analysis},~\eqref{eqn:proof:13} follows from~\eqref{eqn:proof:6}, and finally~\eqref{eqn:proof:14} follows from our proof earlier that $\lim_{N\rightarrow\infty}\hat{J}_{\rm DRO}\myeq{a.e.}J^\star$.
\end{proof}
\fi

 \begin{remark}[Finite-Sample Convergence]
 	Under Assumption~\ref{assum:diagonally_dominant}, the proof of Theorem~\ref{tho:consistency} demonstrates that $ \hat{J}_{\rm DRO}-J^\star=\mathcal{O}(\varepsilon_{\rm TV}(\alpha_N))=\mathcal{O}(\ln^{1/2}(1/\alpha_N)N^{-1/2})$ when the ambiguity set is determined by the total variation distance, i.e., $\hat{\Pc}_N:=\B_{\rm TV,\varepsilon_{\rm TV}(\alpha)}(\hat{\P}'_N)$. Therefore, for fixed $\alpha_N=\alpha$, $ \hat{J}_{\rm DRO}-J^\star=\mathcal{O}(N^{-1/2})$. The dependency on $N$ seems to be order optimal, i.e., there seems to exist cost functions and distributions for which $\hat{J}_{\rm DRO}-J^\star=\Omega(N^{-1/2})$ even when the samples are not noisy; see, e.g.,~\cite[Proposition~1]{wibisono2012finite} for continuous distributions. An interesting direction for future research remains to prove the lower bound $\hat{J}_{\rm DRO}-J^\star=\Omega(N^{-1/2})$ for the exact problem formulation in this paper, i.e., noisy samples and discrete random variables.
 \end{remark}

\begin{remark}[Deconvolution]
	The importance of Assumption~\ref{assum:diagonally_dominant} remains to be fully investigated. In our proofs, this assumption is used to show that $\lim_{N\rightarrow\infty} \B_{\rm TV,\varepsilon_{\rm TV}(\alpha_N)}(\hat{\P}'_N)=\{\P\}$. Therefore, under appropriate conditions that ensure the uniqueness of the deconvolution, i.e., conditions under which it is guaranteed that $\{\Q\in\Delta(\Xi):\O\star \Q=\P'\}=\{\P\}$, the uniform diagonal dominance in Assumption~\ref{assum:diagonally_dominant} may not be necessary to ensure consistency. Asserting appropriate conditions and proving this statement remains an important direction for future research. 
\end{remark}

\section{Worst-Case Distributions} \label{sec:solving}
Motivated by the results of the previous section, we examine a generic worst-case expectation problem:
\begin{align} \label{eqn:generic_DRO}
	\sup_{\Q\in \B_{\rm TV,\varepsilon}(\hat{\P}'_N)} \E^{\Q}[\ell(\xi)].
\end{align}
For instance, $\ell(\xi)=h(x,\xi)$ for a fixed $x\in\X$. 

\begin{theorem} \label{tho:worst_case}
	The worst-case expectation problem in~\eqref{eqn:generic_DRO} equals
	\begin{align*}
		\begin{cases}
			\displaystyle \inf_{\scriptsize
			\begin{array}{c}
			(\lambda(\xi'))_{\xi'\in\Xi'}, \\ (\mu(\xi'))_{\xi'\in\Xi'}, \\
			r,t
			\end{array}
			} & \displaystyle r+2\varepsilon t+\sum_{\xi'\in\Xi}(\mu(\xi')-\lambda(\xi')) \hat{\P}'_N(\xi'),\\
			\displaystyle \hspace{8mm}\mathrm{s.t.} & \displaystyle \ell(\xi) +\sum_{\xi'\in\Xi}(\lambda(\xi') -\mu(\xi'))\O(\xi'|\xi)\leq r,\\
			& \displaystyle \lambda(\xi')+\mu(\xi')\leq t,\\
			& \displaystyle \lambda(\xi')\geq 0, \forall \xi'\in\Xi',\\
			& \displaystyle \mu(\xi')\geq 0, \forall \xi'\in\Xi'.
		\end{cases}
	\end{align*}
\end{theorem}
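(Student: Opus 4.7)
The plan is to recognize that~\eqref{eqn:generic_DRO} is a finite-dimensional linear program in the decision variable $\Q$ and to read off the stated formula as its LP dual. First, I would unfold the constraint $\Q\in\B_{\rm TV,\varepsilon}(\hat{\P}'_N)$ using~\eqref{eqn:def_new_set} together with the summation form of the total variation distance to rewrite it as $\sum_{\xi'\in\Xi'}\bigl|\sum_{\xi\in\Xi}\O(\xi'|\xi)\Q(\xi)-\hat{\P}'_N(\xi')\bigr|\leq 2\varepsilon$. Introducing nonnegative slack variables $s(\xi')$ to linearize each absolute value, the primal in~\eqref{eqn:generic_DRO} becomes the LP
\begin{align*}
\max_{\Q,s}\;&\textstyle\sum_{\xi\in\Xi}\ell(\xi)\Q(\xi)\\
\mathrm{s.t.}\;& \textstyle\sum_{\xi\in\Xi}\O(\xi'|\xi)\Q(\xi)-s(\xi')\leq \hat{\P}'_N(\xi'),\;\forall\xi'\in\Xi',\\
& \textstyle-\sum_{\xi\in\Xi}\O(\xi'|\xi)\Q(\xi)-s(\xi')\leq -\hat{\P}'_N(\xi'),\;\forall\xi'\in\Xi',\\
& \textstyle\sum_{\xi'\in\Xi'} s(\xi')\leq 2\varepsilon,\quad\textstyle\sum_{\xi\in\Xi}\Q(\xi)=1,\\
& \Q(\xi)\geq 0,\; s(\xi')\geq 0.
\end{align*}

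Next, I would dualize by standard LP rules, attaching multipliers $\mu(\xi')\geq 0$ to the first family of inequalities, $\lambda(\xi')\geq 0$ to the second, $t\geq 0$ to the total-slack constraint, and a free multiplier $r$ to the normalization. The dual objective then reads $r+2\varepsilon t+\sum_{\xi'\in\Xi'}(\mu(\xi')-\lambda(\xi'))\hat{\P}'_N(\xi')$. The dual constraint obtained from the primal column of $\Q(\xi)$ rearranges to $\ell(\xi)+\sum_{\xi'\in\Xi'}(\lambda(\xi')-\mu(\xi'))\O(\xi'|\xi)\leq r$, and the column of $s(\xi')$ yields $\lambda(\xi')+\mu(\xi')\leq t$, which together with $\lambda,\mu\geq 0$ reproduces the formulation stated in the theorem.

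Finally, strong LP duality closes the argument: the primal feasible set sits inside the compact simplex $\Delta(\Xi)$ with a bounded linear objective (since $\ell$ is real-valued on the finite set $\Xi$), so whenever the ambiguity set is nonempty, both the primal and the dual optima are finite and coincide, and the $\sup$ in~\eqref{eqn:generic_DRO} equals the displayed $\inf$. I expect the principal technical nuisance to be purely bookkeeping: one must be careful to assign $\lambda$ and $\mu$ to the correct sides of the two-sided bound so that the signs come out as $\mu-\lambda$ in the objective while $\lambda-\mu$ appears in the constraint on $r$, rather than the reverse (which would amount to a harmless relabeling of the multipliers).
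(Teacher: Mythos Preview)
Your proposal is correct and follows essentially the same route as the paper: unfold the total variation constraint into a linear system via the slack variables $s(\xi')$, then dualize. The only cosmetic difference is that the paper writes out the Lagrangian explicitly, swaps $\sup$ and $\inf$ by invoking Slater's condition at $\Q=\P$, evaluates the inner $\sup$ over $(\Q,s)$ in closed form, and only then introduces $r$ and $t$ as epigraph variables for the resulting $\max$ terms, whereas you attach $r$ and $t$ directly as multipliers on the normalization and the slack-budget constraints and appeal to finite LP strong duality; the two derivations are equivalent and yield the identical dual.
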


\ifdefined\SHORTVERSION
\begin{proof}
	\revise{The proof is moved to an online report~\cite{Farokhi_report_2023} due to space constraints.}
\end{proof}
\fi

\ifdefined\LONGVERSION
\begin{proof} Note that
	\begin{align*}
		&\sup_{\Q\in\B_{\rm TV,\varepsilon}(\hat{\P}'_N)} \E^{\Q}[\ell(\xi)]
		\\&\!=\!
		\begin{cases}
			\displaystyle \sup_{\Q\in\Delta(\Xi)} & \displaystyle \sum_{\xi\in\Xi} \ell(\xi) \Q(\xi),\\
			\displaystyle \mathrm{s.t.} & \displaystyle \sum_{\xi'\in\Xi'}|\hat{\P}'_N(\xi')-[\O\star\Q](\xi')|\leq 2\varepsilon,
		\end{cases}\\
		&\!=\!
		\begin{cases}
			\displaystyle \sup_{\Q\in\Delta(\Xi),s:\Xi'\rightarrow\R} & \displaystyle \sum_{\xi\in\Xi} \ell(\xi) \Q(\xi),\\
			\displaystyle \quad\quad\;\mathrm{s.t.} & \displaystyle
			|\hat{\P}'_N(\xi')-[\O\star\Q](\xi')|\leq s(\xi'),\\ 
			& \displaystyle \sum_{\xi'\in\Xi'}s(\xi')\leq 2\varepsilon,
		\end{cases}\\
		&\!=\!
		\begin{cases}
			\displaystyle \sup_{\Q\in\Delta(\Xi),s\in\mathcal{S}} & \displaystyle \sum_{\xi\in\Xi} \ell(\xi) \Q(\xi),\\
			\displaystyle \quad\;\;\mathrm{s.t.} & \displaystyle
			\hat{\P}'_N(\xi')-[\O\star\Q](\xi')\leq s(\xi'),\\ 
			&[\O\star\Q](\xi')-\hat{\P}'_N(\xi')\leq s(\xi'),
		\end{cases}
	\end{align*}
	where $\mathcal{S}:=\{s:\Xi'\rightarrow\R|\sum_{\xi'\in\Xi'}s(\xi')\leq 2\varepsilon\}$. 
	Using a standard duality argument, we obtain
	\begin{align*}
		&\sup_{\Q\in\B_{\rm TV,\varepsilon}(\hat{\P}'_N)} \E^{\Q}[\ell(\xi)]\\
		=&\sup_{\Q,s} \inf_{\lambda,\mu\geq 0}\Bigg[ \sum_{\xi\in\Xi} \ell(\xi) \Q(\xi) \\
		&\hspace{.3in}+\sum_{\xi'\in\Xi}\lambda(\xi')\Bigg(s(\xi')+\sum_{\xi\in\Xi}\O(\xi'|\xi)\Q(\xi)-\hat{\P}'_N(\xi')\Bigg)\Bigg]\\
		&\hspace{.3in}+\sum_{\xi'\in\Xi}\mu(\xi')\Bigg(s(\xi')-\sum_{\xi\in\Xi}\O(\xi'|\xi)\Q(\xi)+\hat{\P}'_N(\xi')\Bigg)\Bigg]\\
		=&\sup_{\Q,s} \inf_{\lambda,\mu\geq 0}\Bigg[ 	\sum_{\xi\in\Xi} \Q(\xi) \bigg(\ell(\xi)\!+\!\sum_{\xi'\in\Xi}(\lambda(\xi')\!-\!\mu(\xi'))\O(\xi'|\xi) \bigg) \\
		&\hspace{.6in}+\sum_{\xi'\in\Xi}(\lambda(\xi')+\mu(\xi')) s(\xi') \\
		&\hspace{.6in}+\sum_{\xi'\in\Xi}(\mu(\xi')-\lambda(\xi')) \hat{\P}'_N(\xi')\Bigg]\\
		=& \inf_{\lambda,\mu\geq 0}\sup_{\Q,s}\Bigg[ 	\sum_{\xi\in\Xi} \Q(\xi) \bigg(\ell(\xi)\!+\!\sum_{\xi'\in\Xi}(\lambda(\xi')\!-\!\mu(\xi'))\O(\xi'|\xi) \bigg) \\
		&\hspace{.6in}+\sum_{\xi'\in\Xi}(\lambda(\xi')+\mu(\xi')) s(\xi') \\
		&\hspace{.6in}+\sum_{\xi'\in\Xi}(\mu(\xi')-\lambda(\xi')) \hat{\P}'_N(\xi')\Bigg]\\	
		=&\inf_{\lambda,\mu\geq 0}\Bigg[\max_{\xi\in\Xi} \bigg(\ell(\xi) +\sum_{\xi'\in\Xi}(\lambda(\xi')-\mu(\xi'))\O(\xi'|\xi) \bigg)\\
		&\hspace{.6in}+2\varepsilon\max_{\xi'\in\Xi'}(\lambda(\xi')+\mu(\xi'))\\
		&\hspace{.6in}+\sum_{\xi'\in\Xi}(\mu(\xi')-\lambda(\xi')) \hat{\P}'_N(\xi')\Bigg],
	\end{align*}
	where equality holds due to strong duality because the optimization problem is convex (in fact it is a linear program) and Slater condition holds with $\Q'=\P'=\O\star\P$~\cite[\S~5.2.3]{boyd2004convex}. Therefore, we can reformulate~\eqref{eqn:generic_DRO} as the optimization problem in the statement of the theorem.
\end{proof}
\fi

Now, we are ready to leverage the result of Theorem~\ref{tho:worst_case} to compute the solution to \revise{the distributionally-robust optimization problem in~\eqref{eqn:DRO} with $\hat{\Pc}_N=\B_{\rm TV,\varepsilon}(\hat{\P}'_N)$.} In the next corollary, a computationally-friendly convex reformulation for this problem is provided. 

\begin{corollary} \label{cor:worst_case}
    The worst-case expectation problem \revise{in~\eqref{eqn:DRO} with $\hat{\Pc}_N=\B_{\rm TV,\varepsilon}(\hat{\P}'_N)$} equals
	\begin{align*}
		\begin{cases}
			\!\!\!\!\displaystyle \inf_{\scriptsize
				\begin{array}{c}
					(\lambda(\xi'))_{\xi'\in\Xi'}, \\ (\mu(\xi'))_{\xi'\in\Xi'}, \\
					r,t,x\in\X
				\end{array}
			} & \displaystyle r+2\varepsilon t+\sum_{\xi'\in\Xi}(\mu(\xi')-\lambda(\xi')) \hat{\P}'_N(\xi'),\\
			\!\!\!\!\displaystyle \hspace{8mm}\mathrm{s.t.} & \displaystyle h(x,\xi) +\sum_{\xi'\in\Xi}(\lambda(\xi') -\mu(\xi'))\O(\xi'|\xi)\leq r,\\
			& \displaystyle \lambda(\xi')+\mu(\xi')\leq t,\\
			& \displaystyle \lambda(\xi')\geq 0, \forall \xi'\in\Xi',\\
			& \displaystyle \mu(\xi')\geq 0, \forall \xi'\in\Xi'.
		\end{cases}
	\end{align*}
\end{corollary}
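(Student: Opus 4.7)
The plan is to derive the corollary as a direct application of Theorem~\ref{tho:worst_case} followed by an interchange of infimums, with the convexity hypothesis on $h(\cdot,\xi)$ entering only at the end to certify that the resulting reformulation is a tractable convex program.

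First, I would fix an arbitrary $x\in\X$ and apply Theorem~\ref{tho:worst_case} with $\ell(\xi)=h(x,\xi)$. This yields
\begin{align*}
	\sup_{\Q\in \B_{\rm TV,\varepsilon}(\hat{\P}'_N)} \E^{\Q}[h(x,\xi)] = \inf_{\lambda,\mu,r,t} \Bigl\{ r+2\varepsilon t+\textstyle\sum_{\xi'}(\mu(\xi')-\lambda(\xi'))\hat{\P}'_N(\xi') \Bigr\}
\end{align*}
subject to the constraints $h(x,\xi)+\sum_{\xi'}(\lambda(\xi')-\mu(\xi'))\O(\xi'|\xi)\leq r$ for all $\xi\in\Xi$, $\lambda(\xi')+\mu(\xi')\leq t$, and $\lambda(\xi'),\mu(\xi')\geq 0$ for all $\xi'\in\Xi'$. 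Note that this step requires no new work: the Slater condition and convexity used in the theorem's proof are independent of $x$.

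Second, I would take $\inf_{x\in\X}$ of both sides and invoke the elementary identity $\inf_{x}\inf_{\lambda,\mu,r,t}(\cdot)=\inf_{x,\lambda,\mu,r,t}(\cdot)$ to merge the two infimums into a single joint minimization over $(x,\lambda,\mu,r,t)$. Since $x$ only appears in the first constraint, the joint program takes precisely the form displayed in the corollary statement, with the first constraint understood to hold for every $\xi\in\Xi$.

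Finally, I would verify that the resulting reformulation is convex, which is the sole role of the hypothesis that $h(\cdot,\xi)$ is convex. The objective is linear in $(\lambda,\mu,r,t)$ and does not depend on $x$. The constraints $\lambda(\xi')+\mu(\xi')\leq t$ and the non-negativity constraints are linear. The constraint $h(x,\xi)+\sum_{\xi'}(\lambda(\xi')-\mu(\xi'))\O(\xi'|\xi)\leq r$ is convex in $(x,\lambda,\mu,r)$ because $h(\cdot,\xi)$ is convex in $x$ and the remaining terms are linear. The feasible set $\X$ can be assumed convex without loss since this is standard in the context of the corollary. No genuine obstacle arises: all the real work — the Lagrangian duality, the treatment of the convolution with $\O$, and the strong duality argument — has already been carried out in the proof of Theorem~\ref{tho:worst_case}, and the corollary is a straightforward lifting of that reformulation across the outer minimization.
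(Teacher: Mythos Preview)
Your proposal is correct and matches the paper's intended approach: the corollary is stated there without proof, as an immediate consequence of Theorem~\ref{tho:worst_case} obtained by setting $\ell(\xi)=h(x,\xi)$ and taking the joint infimum over $x$ and the dual variables. Your observation that the quantifier on the first constraint should read $\forall\,\xi\in\Xi$ (rather than $\forall\,x\in\X$, which appears to be a typo in the statement since $x$ is already a decision variable) is also correct.
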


\revise{
\begin{remark}
    Note that~\eqref{eqn:DRO} involves two nested optimization problems (one maximization and one minimization) while the equivalent problem in Corollary~\ref{cor:worst_case} contains only a single minimization. This can significantly reduce the complexity of solving the problem. Furthermore, if $h(\cdot,\xi):\X\rightarrow\R$ is quasi-convex, the overall optimization problem in  Corollary~\ref{cor:worst_case} is convex because the cost function and the constraints are convex in all decision variables. Although the proof of  Corollary~\ref{cor:worst_case} may not require convexity of $h$, solving the optimization problem for non-convex $h$ can be numerically difficult as the constraint set can become the union of disjointed sets.
\end{remark}
}

\begin{figure*}
	\centering
	\begin{tikzpicture}
		\node[] at (0,0) {\includegraphics[width=0.25\linewidth]{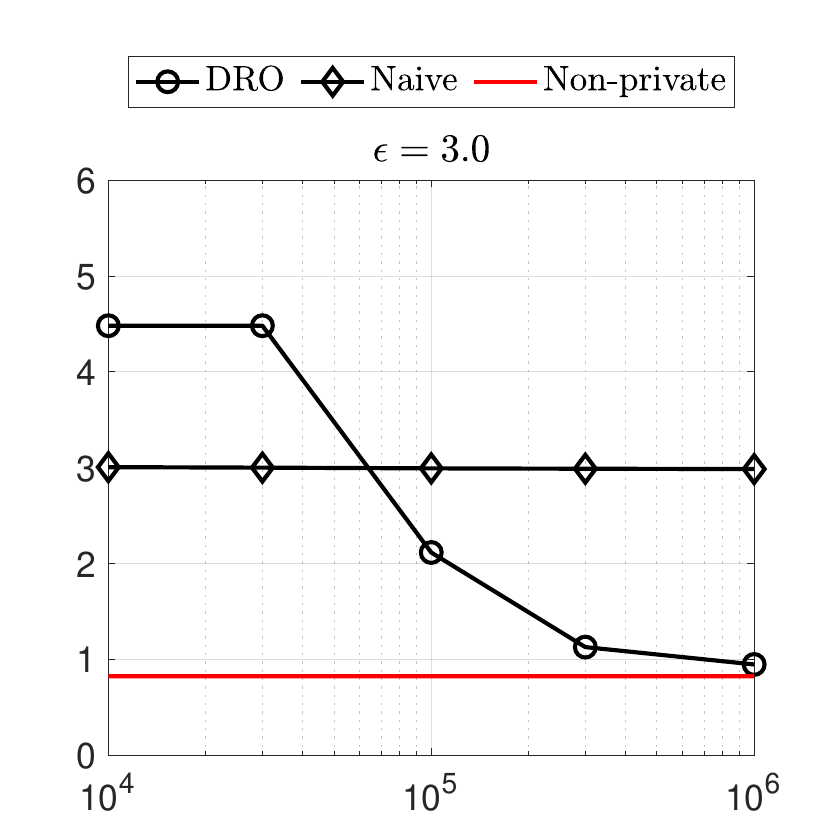}};
		\node[rotate=90] at (-2.1,-0.3) {\footnotesize $\E^\P[h(\hat{x}_N,\xi)]$};
		\node[] at (0,-2.3) {\footnotesize $N$};
		\node[] at (4.3,0) {\includegraphics[width=0.25\linewidth]{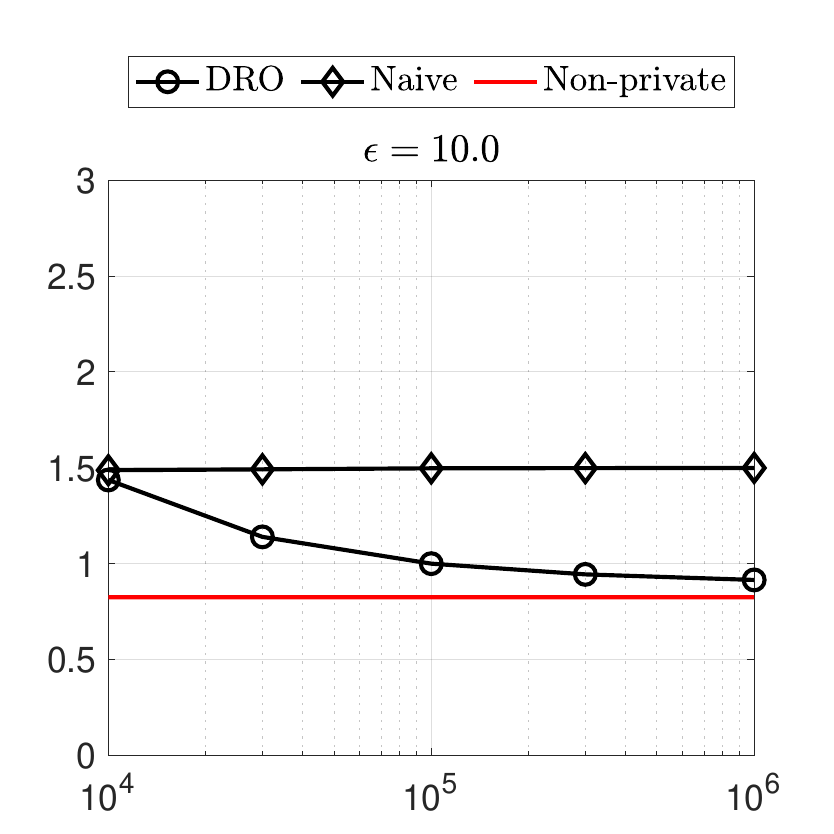}};
		\node[rotate=90] at (2.2,-0.3) {\footnotesize $\E^\P[h(\hat{x}_N,\xi)]$};
		\node[] at (4.3,-2.3) {\footnotesize $N$};
		\node[] at (8.6,0) {\includegraphics[width=0.25\linewidth]{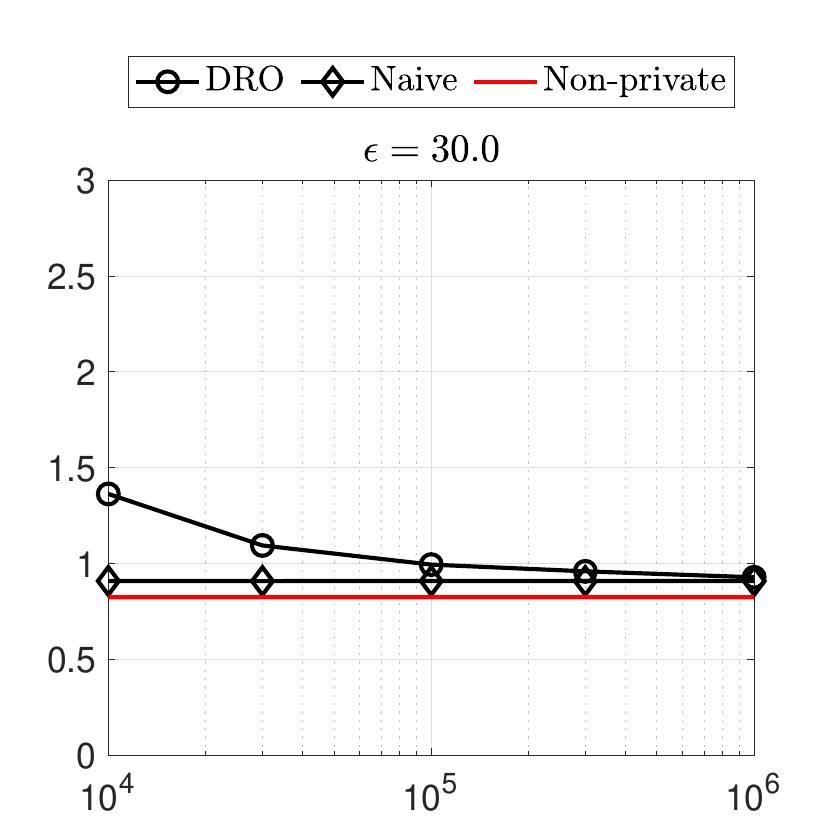}};
		\node[rotate=90] at (6.5,-0.3) {\footnotesize $\E^\P[h(\hat{x}_N,\xi)]$};
		\node[] at (8.6,-2.3) {\footnotesize $N$};
		\node[] at (12.9,0) {\includegraphics[width=0.25\linewidth]{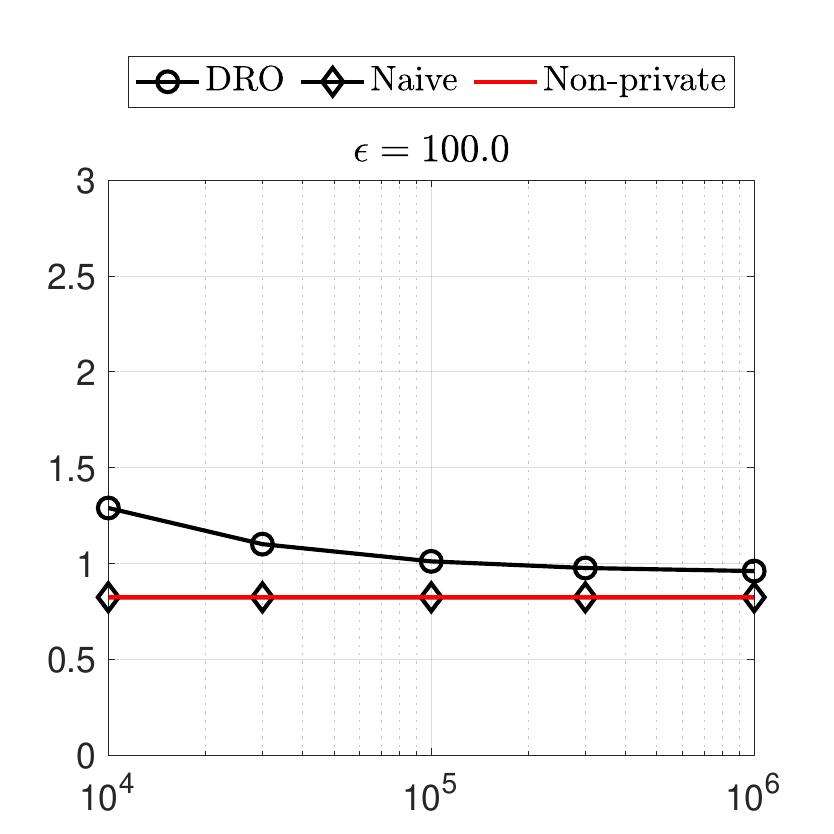}};
		\node[rotate=90] at (10.8,-0.3) {\footnotesize $\E^\P[h(\hat{x}_N,\xi)]$};
		\node[] at (12.9,-2.3) {\footnotesize $N$};
	\end{tikzpicture}
	\vspace{-3mm}
	\caption{Out-of-sample performance $\E^\P[h(\hat{x}_N,\xi)]$ for linear regression with noise-less non-private data (\redline{}), na\"{i}ve linear regression with noisy data (\blackd{}), and distributionally-robust linear regression (\blackcirc{}) versus number of data points $N$.  }
	\vspace{-5mm}
	\label{fig:1}
\end{figure*}

\section{Numerical Example} \label{sec:numerical}
In this section, we demonstrate the capabilities of our results on distributionally-robust optimization with noisy data in the context of privacy-preserving linear regression. We use a dataset containing information regarding nearly 2,260,000 loans made on a peer-to-peer lending platform, called the Lending Club, which is available on Kaggle~\cite{kaggle1}. The dataset contains loan attributes, such as total loan size and interest rates of the loans per annum, and borrower information, such as number of credit lines, state of residence, and age. In our numerical example, we aim to learn a linear regression model for estimating interest rates of the loans based on features of loan size and credit rating. Since our results are for discrete random variables, we discretize the credit rating (mapping scores of 650 to 850 with brackets of 50 to $\{1,\dots,5\}$), the loan amount (mapping \$0 to \$40,000 with brackets of \$10,000 to $\{1,\dots,5\}$), and the interest rate (mapping 5\% to 35\% with increments of 5\% to $\{1,\dots,7\}$). Let $\xi$ be a vector whose entries are, respectively, the discretized credit rating, the discretized loan amount, and the discretized interest rates. To train the linear regression model, we aim to solve~\eqref{eqn:stochastic_program} with $h(x,\xi)=(\xi_3-[\xi_1\;\xi_2\;1]x)^2$. In what follows, however, we assume that we do not have access to the exact measurements of $\xi$. We are in fact supplied with differentially-private perturbed measurements $\xi'$. The following definition and the subsequent theorem make this more clear. Before stating these results, we would like to define the notation $\mathrm{diam}(\Xi)=\max_{\xi,\bar{\xi}} \|\xi-\bar{\xi}\|$.

\begin{definition}[Local Differential Privacy] Conditional probability $\O(\xi'|\xi)$ is $\epsilon$-differentially-private if
$
\revise{\O}[\xi'\in\mathcal{A}|\xi]\leq \exp(\epsilon)\revise{\O}[\xi'\in\mathcal{A}|\bar{\xi}],
$
for all $\mathcal{A}\subseteq \Xi'$ and all $\xi,\bar{\xi}\in\Xi$.
\end{definition}

\begin{proposition}
\label{prop:DP}
Assume $\Xi'=\Xi$. The following conditional probability guarantees $\epsilon$-local differential privacy:
\begin{align*}
\O(\xi'|\xi)={\displaystyle\exp\left(\frac{-\epsilon\|\xi-\xi'\|}{2\mathrm{diam}(\Xi)} \right)}/{\displaystyle\sum_{\xi''\in\Xi}\exp\left(\frac{-\epsilon\|\xi-\xi''\|}{2\mathrm{diam}(\Xi)} \right)}.
\end{align*}
\end{proposition}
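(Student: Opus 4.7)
The plan is to recognize that the proposed $\O(\cdot|\xi)$ is an instance of the exponential mechanism with score function $-\|\xi-\xi'\|$, and to verify the standard point-wise ratio bound. Since $\Xi$ is finite and the definition of $\epsilon$-local differential privacy is additive over $\mathcal{A}$, it suffices to prove the point-wise inequality $\O(\xi'|\xi)\leq\exp(\epsilon)\,\O(\xi'|\bar{\xi})$ for every $\xi'\in\Xi'=\Xi$ and every $\xi,\bar{\xi}\in\Xi$, and then sum over $\xi'\in\mathcal{A}$ to recover the statement in the definition.

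To establish the point-wise bound, I would form the ratio $\O(\xi'|\xi)/\O(\xi'|\bar{\xi})$ and split it into two factors: the ratio of the numerator exponentials and the ratio of the two normalizing sums. For the first factor, the exponent simplifies to $\tfrac{\epsilon}{2\,\mathrm{diam}(\Xi)}(\|\bar{\xi}-\xi'\|-\|\xi-\xi'\|)$, which by the reverse triangle inequality is at most $\tfrac{\epsilon}{2\,\mathrm{diam}(\Xi)}\|\xi-\bar{\xi}\|\leq \epsilon/2$, using the definition $\mathrm{diam}(\Xi)=\max_{\xi,\bar{\xi}}\|\xi-\bar{\xi}\|$. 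For the second (ratio of sums) factor, I would apply the same reverse triangle inequality term-by-term: each summand $\exp(-\epsilon\|\bar{\xi}-\xi''\|/(2\,\mathrm{diam}(\Xi)))$ in the numerator is bounded above by $\exp(\epsilon/2)\exp(-\epsilon\|\xi-\xi''\|/(2\,\mathrm{diam}(\Xi)))$, so the whole ratio of sums is bounded by $\exp(\epsilon/2)$.

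Multiplying the two $\exp(\epsilon/2)$ bounds yields $\O(\xi'|\xi)/\O(\xi'|\bar{\xi})\leq\exp(\epsilon)$. Summing this inequality (rewritten as $\O(\xi'|\xi)\leq \exp(\epsilon)\O(\xi'|\bar{\xi})$) over all $\xi'\in\mathcal{A}$ produces the required inequality for the measures of arbitrary subsets $\mathcal{A}\subseteq\Xi'$, finishing the proof.

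There is no real obstacle here; the argument is the classical exponential-mechanism calculation of McSherry–Talwar specialized to the metric score $-\|\xi-\xi'\|$ whose sensitivity equals $\mathrm{diam}(\Xi)$. The only care needed is the factor $2$ in the denominator $2\,\mathrm{diam}(\Xi)$: it is precisely what splits the overall exponent into two copies of $\epsilon/2$, one absorbed by the numerator ratio and one by the denominator ratio, so that the bound lands exactly at $\exp(\epsilon)$ rather than $\exp(2\epsilon)$.
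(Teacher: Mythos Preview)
Your proposal is correct and follows essentially the same approach as the paper: form the ratio $\O(\xi'|\xi)/\O(\xi'|\bar{\xi})$, split it into the numerator-exponential ratio and the ratio of normalizing sums, bound each factor by $\exp(\epsilon/2)$ via the (reverse) triangle inequality and the definition of $\mathrm{diam}(\Xi)$, and multiply. The paper stops at the point-wise bound, whereas you explicitly sum over $\mathcal{A}$; this is a cosmetic difference only.
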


\ifdefined\SHORTVERSION
\begin{proof}
	The proof is similar to that of exponential mechanisms for differential privacy in~\cite[\S3.4]{1015610400000042}. Detailed derivations are removed due to space constraints and can be found in an online report~\cite{Farokhi_report_2023}.
\end{proof}
\fi

\ifdefined\LONGVERSION
 \begin{proof} 
 	The proof immediately follows from that
 	 \begin{align*}
 		 \frac{\O(\xi'|\xi)}{\O(\xi'|\bar{\xi})}
 		 =&\frac{\displaystyle\exp\left(\frac{-\epsilon\|\xi-\xi'\|}{2\mathrm{diam}(\Xi)} \right)}{\displaystyle\sum_{\xi''\in\Xi}\exp\left(\frac{-\epsilon\|\xi-\xi''\|}{2\mathrm{diam}(\Xi)} \right)}\\
 		 &\times 
 		 \frac{\displaystyle\sum_{\xi''\in\Xi}\exp\left(\frac{-\epsilon\|\bar{\xi}-\xi''\|}{2\mathrm{diam}(\Xi)} \right)}{\displaystyle\exp\left(\frac{-\epsilon\|\bar{\xi}-\xi'\|}{2\mathrm{diam}(\Xi)} \right)}\\
 		 =&\exp\left(\frac{\epsilon(-\|\xi-\xi'\|+\|\bar{\xi}-\xi'\|)}{2\mathrm{diam}(\Xi)} \right)\\
 		 &\times 
 		 \frac{\displaystyle\sum_{\xi''\in\Xi}\exp\left(\frac{-\epsilon\|\bar{\xi}-\xi''\|}{2\mathrm{diam}(\Xi)} \right)}{\displaystyle\sum_{\xi''\in\Xi}\exp\left(\frac{-\epsilon\|\xi-\xi''\|}{2\mathrm{diam}(\Xi)} \right)}
 		 \\
 		 \leq &\exp\left(\frac{\epsilon\|\bar{\xi}-\xi\|}{2\mathrm{diam}(\Xi)} \right)
 		 \frac{\displaystyle\sum_{\xi''\in\Xi}\exp\left(\frac{-\epsilon\|\bar{\xi}-\xi''\|}{2\mathrm{diam}(\Xi)} \right)}{\displaystyle\sum_{\xi''\in\Xi}\exp\left(\frac{-\epsilon\|\xi-\xi''\|}{2\mathrm{diam}(\Xi)} \right)}\\
 		 =&\exp(\epsilon/2)\exp(\epsilon/2),
 		 \end{align*}
 	 where inequality follows from that $-\|\xi-\xi'\|+\|\bar{\xi}-\xi'\|\leq \|\bar{\xi}-\xi\|$ and that 
 	 \begin{align*}
 		     \frac{\displaystyle\exp\left(\frac{-\epsilon\|\bar{\xi}-\xi''\|}{2\mathrm{diam}(\Xi)} \right)}{\displaystyle\exp\left(\frac{-\epsilon\|\xi-\xi''\|}{2\mathrm{diam}(\Xi)} \right)}
 		     &=
 		     \exp\left(\frac{-\epsilon\|\bar{\xi}-\xi''\|+\epsilon\|\xi-\xi''\|}{2\mathrm{diam}(\Xi)} \right)\\
 		     &\leq \exp\left(\frac{\epsilon\|\bar{\xi}-\xi\|}{2\mathrm{diam}(\Xi)} \right)\\
 		     &\leq \exp(\epsilon/2).
 		 \end{align*}
 	 This concludes the proof.
 \end{proof}
\fi 

For the sake of comparison, we consider three linear regression models. The baseline for the best achievable performance is given by the optimal linear regression model using noise-less data. Note that, by construction, no linear regression model can beat the baseline. However, according to Theorem~\ref{tho:consistency}, the performance of the distributionally-robust linear regression converges to the baseline, i.e., the proposed distributionally-robust regression model is asymptotically consistent and optimal, if $\mathbb{O}$ in Proposition~\ref{prop:DP} is uniformly diagonally dominant. \revise{Due to the special form of  the conditional probability $\O$ in Proposition~\ref{prop:DP}, 
\begin{align*}
    \max_{\xi\neq\xi'}\O(\xi'|\xi)\!\propto\!   
    \displaystyle\exp\!\!\left(\!\frac{-\epsilon \displaystyle\min_{\xi\neq \xi'}\|\xi\!-\!\xi'\|}{2\mathrm{diam}(\Xi)}\! \right)
    \!\!=\!\displaystyle\exp\!\left(\frac{-\epsilon }{2\mathrm{diam}(\Xi)} \right)\!,
\end{align*}
where $\propto$ denotes equality up to re-scaling by a constant or proportionality (c.f., Proposition~\ref{prop:DP}) and $\min_{\xi\neq \xi'}\|\xi-\xi'\|=1$ in this example. Therefore, Assumption~\ref{assum:diagonally_dominant} is satisfied if
\begin{align*}
    \frac{\O(\xi|\xi)}{\max_{\xi\neq \xi'}\O(\xi'|\xi)}
    =\exp\!\left(\frac{\epsilon }{2\mathrm{diam}(\Xi)} \right)>|\Xi|,
\end{align*}
or equivalently if $\epsilon>2\mathrm{diam}(\Xi)\log(|\Xi|)\approx 64.17$.} Note that, although we consider privacy budgets $\epsilon$ that are below this bound and thus the conditional density of the privacy-preserving noise is not uniformly diagonally dominant, we can still observe asymptotic consistency numerically. We compare the baseline performance with the performance of two linear regression models in the noisy regime. One of the regression models is \revise{na\"{i}vely} constructed from the noisy data without any processing (i.e., as if the data was noiseless). The other model is the distributionally-robust regression model that can be extracted from Corollary~\ref{cor:worst_case}. \revise{This optimization problem is modelled using CVX~\cite{cvx} and solved using SeDuMi~\cite{sturm1999using}. The codes for conducting the experiments in this section can be downloaded from GitHub\footnote{https://github.com/farhadfarokhi/NoisyDRO}.
}

Figure~\ref{fig:1} illustrates the out-of-sample performance $\E^\P[h(\hat{x}_N,\xi)]$ for linear regression with noise-less non-private data, na\"{i}ve linear regression with noisy data, and distributionally-robust linear regression with noisy data versus the number of data points $N$. For $\epsilon=3.0$, \revise{the out-of-sample performance of the distributionally-robust regression model improves rapidly and surpass  the na\"{i}ve regression model}. For $\epsilon=10.0$, the out-of-sample performance of the distributionally-robust regression model is superior to the na\"{i}ve regression model for the entire range. \revise{Finally, for $\epsilon=30.0$ $\epsilon=100.0$, due to the  small magnitude of the noise, the out-of-sample performance of the na\"{i}ve regression model and linear regression with noise-less non-private data are almost identical, and the out-of-sample performance of the distributionally-robust regression model approaches that of the noise-less regression model rapidly.  In all cases, as the number of data points $N$ grows, the out-of-sample performance of the distributionally-robust regression model approaches that of the noise-less regression model.}

\section{Conclusions and Future Work}
\label{sec:conclusions}
We considered stochastic programs where the uncertainty distribution must be inferred from noisy data samples. We showed that the stochastic programs can be approximated with distributionally\-/robust optimizations that minimize the worst-case expected cost over an ambiguity set of distributions that are sufficiently compatible with the observed data. Future work can focus on continuous random variables.

\ifdefined\LONGVERSION
\appendix
\section{Useful Lemma}

\begin{lemma} \label{lemma:useful}
	Under Assumption~\ref{assum:diagonally_dominant}, there exists $c_0(\O)\in(0,\infty)$ such that $\d_{\rm TV}(\Q,\P) \leq c_0(\O)\d_{\rm TV}(\O\star\Q,\O\star\P).$
\end{lemma}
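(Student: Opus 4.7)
The plan is to view $\O$ as a column-stochastic matrix $M\in\R^{|\Xi|\times|\Xi|}$ with $M_{\xi',\xi}=\O(\xi'|\xi)$, so that $[\O\star\Q](\xi')=(M\Q)(\xi')$ and $\d_{\rm TV}(\Q,\P)=\tfrac{1}{2}\|\Q-\P\|_1$. Setting $v:=\Q-\P$ (a zero-sum vector in $\R^{|\Xi|}$), the lemma is equivalent to exhibiting a finite $c_0(\O)$ for which $\|v\|_1\leq c_0(\O)\,\|Mv\|_1$. This is a quantitative statement of the injectivity of $M$ on the subspace of zero-sum vectors; Assumption~\ref{assum:diagonally_dominant} will be the mechanism that forces $M^{-1}$ restricted to this subspace to be bounded.

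Next, I would decompose $M=D+B$, where $D$ is diagonal with $D_{\xi\xi}=\O(\xi|\xi)$ and $B$ collects the off-diagonal entries, and set $a:=\min_{\xi\in\Xi}\O(\xi|\xi)$. Since $D^{-1}$ is diagonal with entries at most $1/a$, one has $\|D^{-1}w\|_1\leq (1/a)\,\|w\|_1$ for every $w$. The $\xi'$-th column sum of $B$ is $1-\O(\xi'|\xi')\leq 1-a$, so the standard column-sum bound for $\ell_1$ operator norms yields $\|Bv\|_1\leq (1-a)\,\|v\|_1$. Writing $v=D^{-1}(Mv-Bv)$ and applying the triangle inequality,
\begin{align*}
\|v\|_1\;\leq\;\tfrac{1}{a}\,\|Mv\|_1+\tfrac{1-a}{a}\,\|v\|_1,
\end{align*}
which rearranges to $(2a-1)\,\|v\|_1\leq \|Mv\|_1$.

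It remains to verify $2a-1>0$ so that $c_0(\O):=1/(2a-1)$ is finite. Let $b:=\max_{\xi\neq\xi'}\O(\xi'|\xi)$. Column-stochasticity gives, for every $\xi$, $1-\O(\xi|\xi)=\sum_{\xi'\neq\xi}\O(\xi'|\xi)\leq (|\Xi|-1)\,b$. Substituting the uniform diagonal dominance inequality $b<\O(\xi|\xi)/|\Xi|$ of Assumption~\ref{assum:diagonally_dominant} produces $1-\O(\xi|\xi)<\frac{|\Xi|-1}{|\Xi|}\,\O(\xi|\xi)$, i.e. $\O(\xi|\xi)>|\Xi|/(2|\Xi|-1)>1/2$. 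Taking the minimum over $\xi$ gives $a>1/2$, and we conclude with $c_0(\O)=1/\bigl(2\min_\xi\O(\xi|\xi)-1\bigr)\in(0,\infty)$.

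The main obstacle is conceptual rather than technical: recognizing that the inequality is nothing but a bound on the $\ell_1\!\to\!\ell_1$ norm of $M^{-1}$, and finding the clean identity $v=D^{-1}(Mv-Bv)$ that isolates the contribution of the diagonal. Once that is in place, Assumption~\ref{assum:diagonally_dominant} enters only through the short arithmetic showing $a>1/2$. The resulting constant is almost certainly not tight, because the argument never uses the zero-sum structure of $v$; sharper constants would presumably require exploiting that subspace, consistent with the remark in the paper that the uniform-dominance hypothesis may be strengthenable.
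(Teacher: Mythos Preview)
Your proof is correct and takes a genuinely different route from the paper. The paper works pointwise: it lower-bounds $|(\O\star\Q)(\xi')-(\O\star\P)(\xi')|$ by $\min_x\O(x|x)\,|\Q(\xi')-\P(\xi')|$ minus the off-diagonal contribution, sums over $\xi'$, and bounds the resulting double sum crudely by $|\Xi|\max_{\xi\neq\xi'}\O(\xi'|\xi)\sum_\xi|\Q(\xi)-\P(\xi)|$. This yields $c_0(\O)=1/(a-|\Xi|b)$ with $a=\min_\xi\O(\xi|\xi)$ and $b=\max_{\xi\neq\xi'}\O(\xi'|\xi)$, and Assumption~\ref{assum:diagonally_dominant} is used \emph{directly} to ensure the denominator is positive. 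Your argument instead exploits column-stochasticity through the identity $v=D^{-1}(Mv-Bv)$ and the exact $\ell_1$ column-sum bound $\|B\|_{1\to1}=1-a$, giving $c_0(\O)=1/(2a-1)$. Since $1-a=\max_\xi\sum_{\xi'\neq\xi}\O(\xi'|\xi)\leq(|\Xi|-1)b\leq|\Xi|b$, your denominator satisfies $2a-1\geq a-|\Xi|b$, so your constant is never worse and is typically sharper. More interestingly, your proof only needs the consequence $a>1/2$ (ordinary strict diagonal dominance for a column-stochastic matrix); Assumption~\ref{assum:diagonally_dominant} enters solely to certify that inequality, which speaks to the paper's own remark that the uniform-dominance hypothesis may be stronger than necessary. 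One cosmetic point: you momentarily reuse $\xi'$ as a column index when computing the column sums of $B$, whereas earlier it was a row index; the mathematics is unaffected.
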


\begin{proof}
 Note that
 \begin{align*}
	 	|(\O\star\Q)(\xi')&-(\O\star\P)(\xi')|\\
	 	=&\Bigg|\sum_{\xi\in\Xi} \O(\xi'|\xi) \Q(\xi)-\sum_{\xi\in\Xi} \O(\xi'|\xi) \P(\xi)\Bigg|\\
	 	=&\Bigg|\sum_{\xi\in\Xi} \O(\xi'|\xi) (\Q(\xi)-\P(\xi))\Bigg|\\
	 	=&\Bigg|\O(\xi'|\xi') (\Q(\xi')-\P(\xi')) \\
	            &\quad\quad+ \sum_{\xi\in\Xi,\xi\neq \xi'} \O(\xi'|\xi) (\Q(\xi)-\P(\xi))\Bigg|\\
	 	\geq & \O(\xi'|\xi') |\Q(\xi')-\P(\xi')|\\
	            &-\Bigg| \sum_{\xi\in\Xi,\xi\neq \xi'} \O(\xi'|\xi) (\Q(\xi)-\P(\xi))\Bigg|\\
	 	\geq & \min_{x\in\Xi}\O(x|x) |\Q(\xi')-\P(\xi')|\\
	            &-\Bigg| \sum_{\xi\in\Xi,\xi\neq \xi'} \O(\xi'|\xi) (\Q(\xi)-\P(\xi))\Bigg|,
	 \end{align*}
 and, as a result,
 \begin{align*}
	 	\min_{x\in\Xi}\O(x&|x) \sum_{\xi'\in\Xi}|\Q(\xi')-\P(\xi')|\\
	 	\leq &\sum_{\xi'\in\Xi} |(\O\star\Q)(\xi')-(\O\star\P)(\xi')|\\
	 	&+\sum_{\xi'\in\Xi}\Bigg| \sum_{\xi\in\Xi,\xi\neq \xi'} \O(\xi'|\xi) (\Q(\xi)-\P(\xi))\Bigg|\\
	 	\leq &\sum_{\xi'\in\Xi} |(\O\star\Q)(\xi')-(\O\star\P)(\xi')|\\
	 	&+\sum_{\xi'\in\Xi}\sum_{\xi\in\Xi,\xi\neq \xi'} \O(\xi'|\xi) |\Q(\xi)-\P(\xi)|\\
	 	\leq &\sum_{\xi'\in\Xi} |(\O\star\Q)(\xi')-(\O\star\P)(\xi')|\\
	 	&+\max_{\xi\in\Xi,\xi\neq \xi'} \O(\xi'|\xi)\sum_{\xi'\in\Xi}\sum_{\xi\in\Xi,\xi\neq \xi'} |\Q(\xi)-\P(\xi)|\\
	 	\leq &\sum_{\xi'\in\Xi} |(\O\star\Q)(\xi')-(\O\star\P)(\xi')|\\
	 	&+\max_{\xi,\xi'\in\Xi,\xi\neq \xi'} \O(\xi'|\xi)\sum_{\xi'\in\Xi}\sum_{\xi\in\Xi} |\Q(\xi)-\P(\xi)|\\
	 	\leq &\sum_{\xi'\in\Xi} |(\O\star\Q)(\xi')-(\O\star\P)(\xi')|\\
	 	&+|\Xi|\max_{\xi,\xi'\in\Xi,\xi\neq \xi'} \O(\xi'|\xi)\sum_{\xi\in\Xi} |\Q(\xi)-\P(\xi)|.
	 \end{align*}
 Rearranging the terms gives
 \begin{align*}
	 	c'_0(\O)\d_{\rm TV}(\Q,\P) \leq \d_{\rm TV}(\O\star\Q,\O\star\P),
	 \end{align*}
 where
 \begin{align*}
	 	c'_0(\O):=\displaystyle \min_{x\in\Xi}\O(x|x)-|\Xi|\max_{\xi,\xi'\in\Xi,\xi\neq \xi'} \O(\xi'|\xi).
	 \end{align*}
 Note that $c'_0(\O)>0$ so long as Assumption~\ref{assum:diagonally_dominant} holds. The rest follows by setting $c_0(\O)=1/c'_0(\O)$. 
\end{proof}

\fi

\bibliography{ref}
\bibliographystyle{ieeetr}

\end{document}